\newtheorem{theorem}{Theorem}[section]
\newtheorem{lemma}[theorem]{Lemma}
\theoremstyle{definition}
\newtheorem{definition}[theorem]{Definition}
\newtheorem{example}[theorem]{Example}
\theoremstyle{remark}
\newtheorem{remark}[theorem]{Remark}
\numberwithin{equation}{section}
\newtheorem{proposition}[theorem]{Proposition}
\newtheorem{corollary}[theorem]{Corollary}
\newcommand{\R}{\mathbb{R}}
\newcommand{\N}{\mathbb{N}}
\newcommand{\id}{\mathrm{id}}
\newcommand{\vertiii}[1]{{\left\vert\kern-0.25ex\left\vert\kern-0.25ex\left\vert #1
\right\vert\kern-0.25ex\right\vert\kern-0.25ex\right\vert}}
\begin{document}

\title[On the geometric and Riemannian structure of the spaces of GENEOs]{On the geometric and Riemannian structure of the spaces of group equivariant non-expansive operators}

\author{Pasquale Cascarano}
\address{Dipartimento di Matematica, Universit\`a di Bologna, Italia}
\curraddr{}
\email{pasquale.cascarano2@unibo.it}

\author{Patrizio Frosini}
\address{Dipartimento di Matematica, Universit\`a di Bologna, Italia}
\curraddr{}
\email{patrizio.frosini@unibo.it}

\author{Nicola Quercioli}
\address{Dipartimento di Matematica, Universit\`a di Bologna, Italia}
\curraddr{}
\email{nicola.quercioli2@unibo.it}

\author{Amir Saki}
\address{School of Mathematics, Institute for Research in Fundamental Sciences (IPM), Iran}
\curraddr{}
\email{amir.saki.math@gmail.com}

\subjclass[2020]{Primary: 55N31; Secondary: 58D20, 58D30, 62R40, 65D18, 68T09}


\date{}

\dedicatory{}


\begin{abstract}
Group equivariant non-expansive operators have been recently proposed as basic components in topological data analysis and \textcolor{black}{ geometric deep learning}. In this paper we study some geometric properties of the spaces of group equivariant operators and show how a space $\mathcal{F}$ of group equivariant non-expansive operators can be endowed with the structure of a Riemannian manifold, making available the use of gradient descent methods for the minimization of cost functions on $\mathcal{F}$. \textcolor{black}{ Our mathematical model takes into account the probability distribution defined on the data space.} As an application of this approach, we also describe a procedure to select a finite set of representative group equivariant non-expansive operators in the considered manifold.
\end{abstract}

\maketitle


\section{Introduction}
\label{intro}

The concept of group equivariant non-expansive operator (GENEO) has been proposed as a versatile tool in topological data analysis and deep learning \cite{Fr16,FJ16,BFGQ19}, with a particular reference to \textcolor{black}{geometric deep learning \cite{BrBrLeal17}}. We recall that an operator $F$ is called equivariant with respect to a group $G$ if the action of $G$ commutes with $F$, \textcolor{black}{while $F$ is called non-expansive if it does not increase the distance between data.} The study of these operators appears to be interesting for several reasons. On the one hand, the use of GENEOs allows to formalize the role of the observer in data analysis and data comparison.
\textcolor{black}{This is an important goal as it appears clearer and clearer that an efficient approach to these fields of research requires examining not only the data but also the observer that analyzes them. Indeed, the observer's role is often more important than the role of the data themselves. On the other hand, GENEOs could be of help in building new deep learning architectures because of their modularity, and in particular of the fact that the convex combination, composition, maximum, and minimum of GENEOs are still GENEOs \cite{BFGQ19}.}

Several researchers have pointed out that the use of \textcolor{black}{group equivariant operators (GEOs)} could be of great relevance in the development of machine learning \textcolor{black}{ \cite{AnRoPo16,cohen2016group,worrall2017harmonic,GeArCaal23}}, since these operators can inject pre-existing knowledge into the system and allow us to increase our control on the construction of neural networks \cite{Bengio2013}.
Further, it is also worth noting that invariant and non-expansive operators can be used to reduce data variability \cite{Ma12,Ma16}, and that in the last years equivariant transformations have been studied for learning symmetries \cite{ZVERP15,AERP19}.

\textcolor{black}{ Availability of new techniques to build GENEOs allows for new applications in data analysis and machine learning. For instance, \textcolor{black}{ a learning paradigm called GENEOnet has been successfully used for detecting pockets on the surface of proteins hosting ligands \cite{Mi23}}.
Also, advances in the topological and geometric theory of GENEOs can improve our ability in building suitable operators for new applications. For instance, in \cite{Bocchi2023}, in finite settings, all linear GENEOs with respect to a transitive acting group have been characterized.
Further, in \cite{10.3389/frai.2022.786091}, a method for defining non-linear GENEOs using symmetric functions and \textit{permutants} has been provided,
while in \cite{AhFeFr23} it has been shown that these operators can be used to compare graphs, and in \cite{FeFrQuTo23} GENEOs have been adapted to the case of partial equivariance.
}
\textcolor{black}{ Finally, the research concerning the representation of observers as operators could help us in understanding the role of conflicts and contradictions in the development of intelligence} \cite{Fro09}.

This focus on spaces of group equivariant non-expansive operators stresses the need for a study of the topological and geometric structure of these spaces, in order to simplify their exploration and use.

\textcolor{black}{To deal with the real-world problems, it is important to extend the theory of GENEOs to the case of signals endowed with probability distributions, which is the main motivation of this paper. For example, the use of GENEOs in the analysis of \textcolor{black}{ noisy data requires taking probabilities into account \cite{FrGrPa23}}.}
Hence, extending the mathematical framework proposed in \cite{BFGQ19}, in this paper we show how a space of group equivariant non-expansive operators can be endowed with a suitable metric that takes into account the different probability of data. With reference to such a metric, we prove that when the spaces of data are compact, the space of GENEOs is compact too, so implying that it admits arbitrarily good approximations by finite sets of operators.
Furthermore, we show that we can define a suitable Riemannian structure on finite dimensional submanifolds of the space of GENEOs and use this structure to approximate those manifolds by minimizing a cost function via gradient descent methods. \textcolor{black}{Indeed, to find optimal GENEOs, we have to minimize cost functions. This task can be made easier if we can consider the gradients of functions defined on the space of GENEOs. This justifies the last goal of our paper (i.e., endowing the space of GENEOs with a Riemannian structure).
}
{The Riemannian structure we propose is based on the comparison of the action of GENEOs on data, taking into account the probability distribution on the set of admissible signals we are considering.}
At the end of the paper, we describe a computational experiment, where we use our approach to select a representative set of GENEOs from a given manifold of operators.

{From our perspective, endowing the spaces of operators/observers acting on the data with new topological and geometric structures is a key point in the present research concerning the mathematical aspects of machine learning. New results in this direction could allow exploiting GENEOs as components in the realization of new kinds of neural networks, taking advantage from the dimensionality reduction offered by Topological Data Analysis \cite{BFGQ19}.}

The outline of the paper is as follows. In Section \ref{MS} we illustrate our mathematical setting and prove its main properties.
In Section \ref{Experiments} we show how our framework can be used to find a good finite approximation of the considered manifold of GENEOs. In Section \ref{Discussion} we conclude the paper with a brief discussion.

\section{Mathematical setting}
\label{MS}
Since we are interested in studying operators that transform signals, we have to establish which signals will be admitted and which structures will be considered on them. The signals we will consider will be represented as bounded real-valued functions on a set $X$ belonging to \textcolor{black}{a subset $\Phi$ of a finite dimensional vector space $V$}. This vector space will be endowed with a suitable norm and inner product. Furthermore, our \textcolor{black}{vector space $V$} will be associated with a probability distribution that describes how frequently each signal occurs. Then, we \textcolor{black}{will} explore the properties  of our framework to show its natural and suitable mathematical behavior towards the definition of a Riemannian manifold structure on the space of GENEOs. Indeed, we \textcolor{black}{will} show how the norm on signals induce a pseudo-metric on $X$, in such a way that all signals are continuous with respect to this pseudo-metric. \textcolor{black}{Then, we will focus on the permutations of $X$ that preserve our space of data and show that these permutations are isometries of $X$ in our model, so describing the symmetries in the space of admissible signals. For the remainder of the paper, we will fix a subgroup $G$ of the group of such permutations, and we will show that $G$ admits the structure of a topological group.}  Finally, we \textcolor{black}{will} discuss about GEOs and GENEOs, and explore some of their properties. Especially, we \textcolor{black}{will} show that under some assumptions, the space of GENEOs is convex and compact. Also, we \textcolor{black}{will} show how the group $G$ acts on the space of all GENEOs defined between two sets of signals. Furthermore, we \textcolor{black}{will} explain how we can implement a Riemannian structure on a manifold of GENEOs.

{\color{black}
\subsection{Probability inner product $G$-spaces and spaces of admissible signals.}
\label{frame}
Let us list some basic assumptions and definitions, which will be used throughout the paper:

\begin{enumerate}
    \item We choose a set $X$ and an $n$-dimensional vector space $V\subseteq \mathbb{R}^X_b$, where $\mathbb{R}^X_b$ is the set of all bounded functions from $X$ to $\mathbb{R}$. $V$ is the vector space where the functions representing our signals/data belong to.
    \item We choose a subgroup $G$ of the group $\mathrm{Aut}_{\Phi}(X)$ of all bijections $g\colon X\to X$. We observe that $\mathrm{Aut}_\Phi(X)$ and $G$ naturally act on $\mathbb{R}^X_b$ by composition on the right.
        For any $g\in G$, we consider the map $\rho_g:\mathbb{R}^X_b\to \mathbb{R}^X_b$ taking each $\varphi\in \mathbb{R}^X_b$ to $\varphi g$.
    \item \label{assumptionip} We assume that an inner product $\langle\cdot,\cdot\rangle_V$ on $V$ is given, which is invariant under the action of $G$ (i.e., $\langle\rho_g(\varphi_1),\rho_g(\varphi_2)\rangle_V= \langle \varphi_1,\varphi_2 \rangle_V$ for any $\varphi_1,\varphi_2\in V$ and $g\in G$).
    \item \label{coeffalpha} We denote the norm arising  from $\langle\cdot,\cdot \rangle_V$ by $\lVert\cdot\rVert_V$.
        Moreover, for any $\varphi\in V$, we also consider its $L^{\infty}$-norm $\lVert\varphi\rVert_{\infty}=\sup_{x\in X}|\varphi(x)|<\infty$.
        Assumption (\ref{assumptionip}) immediately implies that $\lVert \cdot \rVert_V$ is invariant under the action of $G$, i.e., $\|\rho_g(\varphi)\|_V=\|\varphi\|_V$ for any $\varphi\in V$ and $g\in G$. We note that $\lVert\cdot\rVert_V$ and $\lVert\cdot\rVert_{\infty}$ are equivalent, since $V$ is finite dimensional, i.e., there exist two real numbers $\alpha,\beta>0$ such that $\alpha\lVert\cdot\rVert_{\infty}\le\lVert\cdot\rVert_V\le\beta\lVert\cdot\rVert_{\infty}$. Hence, $\lVert\cdot\rVert_V$ and $\lVert\cdot\rVert_{\infty}$ induce the same topology $\tau_V$ on $V$.
    \item Finally, we assume that there exist a compact subspace $\Phi$ of $V$, representing the data of interest, and a probability Borel measure $\mu$ on $V$, such that
    $(i)$ $\mu$ is invariant under the action of $G$ (i.e., $\mu(A)=\mu(\rho_g(A))$ for every Borel set $A\subseteq V$ and every $g\in G$), and $(ii)$ the support of $\mu$ coincides with $\Phi$.
    We recall that the support of $\mu$ is defined to be the closure of the set of all points $\varphi\in V$ for which every open neighborhood of $\varphi$ has a positive measure.
    For every Borel set $A\subseteq V$, $\mu(A)$ represents the probability that our measurement produces a signal belonging to $A$.
    To facilitate our exposition, we will consider a {\color{black}finite} Borel measure $\nu$ on $V$ and a {non-negative} integrable function $f:V\to \R$, such that $\mu(A)=\int_A f d\nu$ for any Borel set $A\subseteq V$.
    As a consequence, $\mu$ is absolutely continuous with respect to $\nu$ and $f$ is the Radon-Nikodym derivative of $\mu$ with respect to $\nu$. We will also require that $f$ and $\nu$ are invariant under the action of $G$ (i.e., if $g\in G$ then $f(\rho_g(\varphi))=f(\varphi)$ for any $\varphi\in \Phi$, and {$\nu(A)=\nu(\rho_g(A))$ for every Borel set $A\subseteq V$};
    e.g., we can choose $f$ equal to the indicator function of $\Phi$ and $\nu=\mu$).
     Of course, $\mu(\Phi)=\int_\Phi f d\nu=1$, since $\mu$ is a probability measure and $\Phi$ is the support of $\mu$. Therefore, $\mu$ can be also seen as a probability Borel measure on $\Phi$. With a small abuse of notation, we will still use the symbols $\mu$ and $\nu$ to denote the restrictions of these Borel measures to $\Phi$, and $f$ to denote the restriction of $f$ to $\Phi$.
\end{enumerate}

We stress the following remarks:
\begin{description}
  \item[$(a)$] Point (1) implies that the vector space $V$ is isomorphic to $\mathbb{R}^n$.
  \item[$(b)$] Since $\|\varphi_1 g-\varphi_2 g\|_\infty=\|\varphi_1-\varphi_2\|_\infty$,
the maps $\rho_g$, $\rho_{g^{-1}}=({\rho_g})^{-1}$ are homeomorphisms with respect to $\|\cdot\|_\infty$ (and hence also with respect to $\|\cdot\|_V$). Hence, if $A$ is a Borel set in $V$, then $\rho_g(A)$ is a Borel set in $V$.
  \item[$(c)$]  Since $\Phi$ is the support of the measure $\mu$, which is invariant under the action of $G$, it is easy to check that $\varphi g\in\Phi$ for any $\varphi\in\Phi$ and any $g\in G$. In other words, the elements of the group $G$ represent some invariances of the signal space $\Phi$.
\end{description}

Under the previous assumptions (1)-(5), we  say that $\Phi$ is a space of \emph{admissible signals} in probability, and $V$ is a (finite dimensional)
\textcolor{black}{\emph{probability inner product $G$-space}.}

\begin{example}
\label{basicexample}
A simple example illustrating the concepts of probability inner product $G$-space and space of admissible signals
in probability can be given by considering a set of grayscale images on the square $X=\{(x,y)\in\R^2:|x|,|y|\le 1\}$, as explained in the following.
Here, we set $V$ equal to the collection of all functions $\varphi$ that can be obtained by restricting to $X$ some polynomial $p(x,y)=\sum_{0\le i,j\le k-1}a_{ij} x^iy^j$ of degree strictly less than $k$ in each variable. The vector space $V$ is isomorphic to $\R^{k^2}$.
$G$ is set equal to the group of all rigid motions preserving the square $X$.
In other words, $G$ is generated by the map $q_1$ taking $(x,y)$ to $(-x,y)$, and the map $q_2$ taking $(x,y)$ to $(y,x)$.
If we identify $V$ and $\R^{k^2}$, the inner product $\langle\cdot,\cdot \rangle_V$ can be chosen equal to the standard inner product on $\R^{k^2}$, while the probability measure $\mu$ can be defined by setting $\mu(A):=\frac{\lambda(A\cap\Phi)}{\lambda(\Phi)}$ for every Borel set $A\subseteq V$,
where $\Phi=\{p\in V: p(X)\subseteq [0,1]\}\subseteq V$ and $\lambda$ is the Lebesgue measure on $\R^{k^2}$.
We observe that, if $p\in V$ and $p(x,y)=\sum_{0\le i,j\le k-1}a_{ij} x^iy^j$, then $pq_1(x,y)=\sum_{0\le i,j\le k-1}(-1)^i a_{ij} x^iy^j$,
while $pq_2(x,y)=\sum_{0\le i,j\le k-1}a_{ji} x^iy^j$.
It follows that the actions of $q_1$ and $q_2$ on $V$ by composition on the right preserve both the inner product $\langle\cdot,\cdot \rangle_V$ and the probability measure $\mu$.
In this example, $\Phi$ can be interpreted as a set of grayscale images on $X$, where the values $0$ and $1$ correspond to black and white, respectively, while the values between $0$ and $1$ represent different levels of gray.
With these choices, we can easily check that the previous assumptions (1)-(5) hold. Therefore, $V$ is a probability inner product $G$-space and $\Phi$ is a space of admissible signals in probability.
\end{example}
}

\subsection{The pseudo-metric on $X$ induced by admissible signals.}

In this subsection, we show that the distance {\color{black} $\Delta_{\Phi}$} on $\Phi$, induced by $\lVert\cdot\rVert_V$, can be pulled back to a pseudo-metric on $X$.
We recall that a pseudo-metric on a set $X$ is  a distance $d$ without the property $d(x,y)=0\implies x=y$. Note that the collection of all open balls $B_X(x,r)=\{y\in X:d(x,y)<r\}$ is a basis for a topology on $X$, {which is called the \emph{topology induced by $d$}.}

The following lemma prepares us to define a pseudo-metric on $X$.

\begin{lemma}\label{Lemma1}
Let $x_1,x_2\in X$. Then, the map $\xi_{x_1,x_2}:\Phi\to\mathbb{R}$ defined by $\xi_{x_1,x_2}(\varphi)=|\varphi(x_1)-\varphi(x_2)|$ for any $\varphi\in\Phi$, is continuous with respect to \textcolor{black}{the topology $\tau_\Phi$ on $\Phi$ induced by the topology $\tau_V$ on $V$,} and it is an integrable random variable with respect to $\nu$.
\end{lemma}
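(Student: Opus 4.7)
The plan is to show that $\xi_{x_1,x_2}$ is in fact Lipschitz with respect to $\lVert\cdot\rVert_V$, which gives continuity immediately, and then to read off integrability from compactness of $\varPhi$.

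First, I would fix arbitrary $\varphi,\psi\in\varPhi$ and estimate the difference $|\xi_{x_1,x_2}(\varphi)-\xi_{x_1,x_2}(\psi)|$. By the reverse triangle inequality applied to $|\varphi(x_1)-\varphi(x_2)|$ and $|\psi(x_1)-\psi(x_2)|$, followed by the ordinary triangle inequality, this is bounded by
$$|(\varphi(x_1)-\psi(x_1))-(\varphi(x_2)-\psi(x_2))|\le |\varphi(x_1)-\psi(x_1)|+|\varphi(x_2)-\psi(x_2)|\le 2\lVert\varphi-\psi\rVert_{\infty}.$$
Note no properties of $X$ are used beyond pointwise evaluation, so this step is purely formal.

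Second, I would invoke the norm equivalence already established in the setting of Subsection \ref{frame}: since $\alpha\lVert\cdot\rVert_{\infty}\le\lVert\cdot\rVert_V$, we have $\lVert\varphi-\psi\rVert_{\infty}\le \frac{1}{\alpha}\lVert\varphi-\psi\rVert_V$. Chaining this with the previous estimate yields
$$|\xi_{x_1,x_2}(\varphi)-\xi_{x_1,x_2}(\psi)|\le \frac{2}{\alpha}\lVert\varphi-\psi\rVert_V,$$
so $\xi_{x_1,x_2}$ is Lipschitz (and in particular continuous) on $\varPhi$ with respect to the topology $\tau_V$ restricted to $\varPhi$.

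Finally, for integrability, I would use that $\varPhi$ is compact in $\tau_V$ by hypothesis, so the continuous function $\xi_{x_1,x_2}$ is bounded on $\varPhi$; combined with $\mu(\varPhi)=1$, this yields $\mu$-integrability, and in turn $\lambda$-integrability of $\xi_{x_1,x_2}\cdot f$ since $\mu=f\lambda$ on Borel sets. There is no real obstacle here — the only mild subtlety is making sure we appeal to the norm equivalence (rather than trying to exploit any metric structure on $X$, of which there is none a priori) to convert the easy $L^\infty$-Lipschitz bound into a $\lVert\cdot\rVert_V$-Lipschitz bound.
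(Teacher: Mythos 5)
Your proof is correct and follows essentially the same route as the paper: the same reverse-triangle/triangle estimate giving the bound $2\lVert\varphi-\psi\rVert_{\infty}$, converted to a $\frac{2}{\alpha}\lVert\varphi-\psi\rVert_V$ Lipschitz bound via the norm equivalence. Your explicit remark on why continuity plus compactness of $\varPhi$ and $\mu(\varPhi)=1$ gives $\lambda$-integrability of $\xi_{x_1,x_2}\cdot f$ only makes explicit what the paper leaves implicit.
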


\begin{proof}
Let $\varphi_1,\varphi_2\in \Phi$. Then,
\begin{align*}
|\xi_{x_1,x_2}(\varphi_1)-\xi_{x_1,x_2}(\varphi_2)|&=\big|\left|\varphi_1(x_1)-\varphi_1(x_2)\right|-\left|\varphi_2(x_1)-\varphi_2(x_2)\right|\big|\\
&\le|(\varphi_1(x_1)-\varphi_1(x_2))-(\varphi_2(x_1)-\varphi_2(x_2))|\\
&=|(\varphi_1(x_1)-\varphi_2(x_1))-(\varphi_1(x_2)-\varphi_2(x_2))|\\
&\le|\varphi_1(x_1)-\varphi_2(x_1)|+|\varphi_1(x_2)-\varphi_2(x_2)|\\
&\le 2\lVert\varphi_1-\varphi_2\rVert_{\infty}
\le \frac{2}{\alpha}\lVert\varphi_1-\varphi_2\rVert_V
\end{align*}
\textcolor{black}{
where $\alpha$ is the coefficient defined in point (\ref{coeffalpha}) of Subsection \ref{frame}.}
Therefore, $\xi_{x_1,x_2}$ is continuous with respect to \textcolor{black}{the topology $\tau_\Phi$ induced by the topology $\tau_V$ defined in point (\ref{coeffalpha}) of Subsection \ref{frame}}.
\textcolor{black}{Now, since $\xi_{x_1,x_2}$ is continuous on the compact space $\Phi$, it is bounded. \textcolor{black}{Moreover, the counterimage $\xi_{x_1,x_2}^{-1}(U)$ of each open set $U\subseteq \R$ is an open set (and hence a Borel set) in $\Phi$.
It follows that the counterimage of each Borel set is a Borel set. This proves that the function $\xi_{x_1,x_2}$ is measurable, since the restriction of $\nu$ to $\Phi$ is still a Borel measure.}
Hence, \textcolor{black}{recalling that the measure $\nu$ is finite}, if $M$ is \textcolor{black}{an upper bound for $|\xi_{x_1,x_2}|$}, then we have that
\[\int_{\Phi}\left|\xi_{x_1,x_2}(\varphi)\right|\,d\nu\le M\nu(\Phi)<\infty.\] }
\end{proof}

Now, we define a pseudo-metric $\Delta_X$ on $X$ as follows:
$$\Delta_X(x_1,x_2)=\int_{\Phi}|\varphi(x_1)-\varphi(x_2)|f(\varphi)\,d\nu,\quad \forall\,x_1,x_2\in X.$$
In plain words, the distance between two points $x_1,x_2$ is set to be the expected value of the function $\xi_{x_1,x_2}$ \textcolor{black}{with respect to the probability measure $\mu$}.

Another relevant pseudo-metric on $X$ is the pseudo-metric $D_X$ defined by setting
$$D_X(x_1,x_2):= \max_{\varphi \in \Phi}|\varphi(x_1) - \varphi(x_2)|.$$
The definition of $D_X$ is well-posed, since $\Phi$ is compact.

\textcolor{black}{
We observe that
\begin{align*}
 \Delta_X(x_1,x_2)
 &=\int_{\Phi}|\varphi(x_1)-\varphi(x_2)| f(\varphi)\,d\nu \\
 &\le \int_{\Phi} D_X(x_1,x_2) f(\varphi)\,d\nu \\
 &= D_X(x_1,x_2)\int_{\Phi} f(\varphi)\,d\nu\\
 &= D_X(x_1,x_2)
 \end{align*}
for every $x_1,x_2\in X$.
The inequality $\Delta_X\le D_X$ implies that the topology $\tau_{D_X}$ induced by $D_X$ is finer than the topology $\tau_{\Delta_X}$ induced by $\Delta_X$.
We recall that if $(X, D_X)$ is complete, {then it is compact \cite[Theorem 1]{BFGQ19}.}
}

\textcolor{black}{For the remainder of the paper}, whenever not differently specified, we assume that $X$ is equipped with the topology arising from $\Delta_X$.
\begin{proposition}\label{pp2}
Each function $\varphi_0\in\Phi$ is continuous with respect to $\Delta_X$.
\end{proposition}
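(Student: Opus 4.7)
The plan is to establish continuity at an arbitrary $x_0\in X$ by a contrapositive argument: fixing $\varepsilon>0$, I will show that the inequality $|\varphi_0(x_1)-\varphi_0(x_0)|\ge\varepsilon$ forces $\Delta_X(x_1,x_0)$ to be bounded below by a strictly positive constant depending only on $\varepsilon$, $\varphi_0$, $\mu$ and the equivalence constant $\alpha$ from Subsection \ref{frame}.

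The key tool is the Lipschitz estimate already worked out inside the proof of Lemma \ref{Lemma1}: for every $x_1,x_0\in X$ and every $\varphi,\varphi_0\in\varPhi$,
\[
\big||\varphi(x_1)-\varphi(x_0)|-|\varphi_0(x_1)-\varphi_0(x_0)|\big|\le \frac{2}{\alpha}\lVert\varphi-\varphi_0\rVert_V.
\]
On the relatively open set $U:=\{\varphi\in\varPhi:\lVert\varphi-\varphi_0\rVert_V<\varepsilon\alpha/4\}$ this forces $|\varphi(x_1)-\varphi(x_0)|\ge\varepsilon/2$ whenever $|\varphi_0(x_1)-\varphi_0(x_0)|\ge\varepsilon$, so
\[
\Delta_X(x_1,x_0)=\int_{\varPhi}|\varphi(x_1)-\varphi(x_0)|\,f(\varphi)\,d\lambda\ \ge\ \frac{\varepsilon}{2}\,\mu(U).
\]

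The place where the standing hypotheses really earn their keep, and what I expect to be the main subtlety, is proving $\mu(U)>0$. This rests on the assumption that $\varPhi$ is exactly the support of $\mu$: since $U$ is the trace on $\varPhi$ of an open ball in $V$ centred at the point $\varphi_0\in\varPhi=\mathrm{supp}(\mu)$, it carries strictly positive $\mu$-mass by the defining property of the support. Setting $\delta:=\frac{\varepsilon}{2}\mu(U)>0$, the contrapositive then yields $|\varphi_0(x_1)-\varphi_0(x_0)|<\varepsilon$ whenever $\Delta_X(x_1,x_0)<\delta$, which is exactly continuity of $\varphi_0$ at the arbitrary point $x_0$ with respect to $\tau_{\Delta_X}$. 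Were $\varPhi$ not the full support, $f$ could vanish on an open $\varPhi$-neighborhood of $\varphi_0$ and the lower bound would collapse to $0$; this is the only step that requires genuine care, while the Lipschitz bound and the tuning of the radius $\varepsilon\alpha/4$ are routine.
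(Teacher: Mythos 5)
Your argument is correct and is essentially the paper's own proof, phrased contrapositively: both integrate over the ball of radius $\varepsilon\alpha/4$ around $\varphi_0$, use the norm equivalence and the triangle-inequality (Lipschitz) bound to get $|\varphi(x)-\varphi(x_0)|\ge|\varphi_0(x)-\varphi_0(x_0)|-\varepsilon/2$ there, and invoke positivity of the ball's $\mu$-measure from the assumption that $\varPhi=\mathrm{supp}(\mu)$ to obtain the lower bound $\Delta_X(x,x_0)\ge\mu(B)\bigl(|\varphi_0(x)-\varphi_0(x_0)|-\varepsilon/2\bigr)$. The only differences are cosmetic (open versus closed ball, citing the estimate from Lemma \ref{Lemma1} rather than re-deriving it), so no changes are needed.
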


\begin{proof}
\textcolor{black}{
Let $x_0\in X$. After choosing an $\varepsilon>0$, let us consider the ball $$B_\varepsilon
=\left\{\varphi\in\Phi:\lVert\varphi_0-\varphi\rVert_{\infty}\le\frac{\varepsilon}{4}\right\}.$$
\textcolor{black}{Since $\varphi_0$ is in the support of $\mu$,} $\mu(B_\varepsilon)$ is positive.
Hence, for every $x\in X$ and $\varphi\in B_{\epsilon}$,}
\begin{align*}\label{ineq1}
|\varphi(x)-\varphi(x_0)|&=|(\varphi_0(x)-\varphi_0(x_0))+(\varphi(x)-\varphi_0(x))+(\varphi_0(x_0)-\varphi(x_0))|\\
&\ge |\varphi_0(x)-\varphi_0(x_0)|-|\varphi(x)-\varphi_0(x)|-|\varphi_0(x_0)-\varphi(x_0)|\\
&\ge |\varphi_0(x)-\varphi_0(x_0)|-2\lVert\varphi_0-\varphi\rVert_{\infty}\nonumber\\
&\ge |\varphi_0(x)-\varphi_0(x_0)|-\varepsilon/2\nonumber.
\end{align*}
This implies that for every $x\in X$,
\begin{align*}
\Delta_X(x,x_0)&=\int_{\Phi}|\varphi(x)-\varphi(x_0)|f(\varphi)\,d\nu\\ &
\ge\int_{B_\varepsilon}|\varphi(x)-\varphi(x_0)|f(\varphi)\,d\nu\\
&\ge \mu(B_\varepsilon) (|\varphi_0(x)-\varphi_0(x_0)|-\varepsilon/2).
\end{align*}
It follows that \textcolor{black}{$|\varphi_0(x)-\varphi_0(x_0)|\le\frac{\Delta_X(x,x_0)}{\mu(B_\varepsilon)}+\varepsilon/2$}.
Therefore, if $\Delta_X(x,x_0)\le\left(\varepsilon/2\right)\mu(B_\varepsilon)$, then
$|\varphi_0(x)-\varphi_0(x_0)|\le\varepsilon$.
\end{proof}

\begin{proposition}
\label{Phi-continuous}
Every function $\varphi\in\Phi$ is non-expansive, and hence continuous, with respect to $D_X$.
\end{proposition}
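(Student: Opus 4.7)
The statement is essentially a tautological consequence of how $D_X$ was defined, so the plan is short and direct.

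My plan is to show the non-expansive property first and then derive continuity from it as a standard consequence of Lipschitz-type estimates. Fix $\varphi\in\varPhi$ and arbitrary points $x_1,x_2\in X$. The key observation is that $\varphi$ itself is one of the functions over which the maximum defining $D_X(x_1,x_2)$ is taken. Consequently,
\[
|\varphi(x_1)-\varphi(x_2)|\le \max_{\psi\in\varPhi}|\psi(x_1)-\psi(x_2)|=D_X(x_1,x_2),
\]
which is precisely the statement that $\varphi\colon(X,D_X)\to\mathbb{R}$ is non-expansive (1-Lipschitz).

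From non-expansiveness, continuity is immediate: given $\varepsilon>0$, choose $\delta=\varepsilon$, so that whenever $D_X(x,x_0)<\delta$ we have $|\varphi(x)-\varphi(x_0)|\le D_X(x,x_0)<\varepsilon$. Hence $\varphi$ is continuous with respect to $D_X$.

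There is essentially no obstacle here; the only subtlety worth mentioning is that the maximum in the definition of $D_X$ is attained (and hence the bound $|\varphi(x_1)-\varphi(x_2)|\le D_X(x_1,x_2)$ is valid as a max rather than a sup), which relies on the compactness of $\varPhi$ assumed in the setup and already invoked right after the definition of $D_X$. No use of the inner product structure, of the measure $\mu$, or of the group $G$ is needed for this proposition.
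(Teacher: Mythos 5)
Your proof is correct and follows essentially the same argument as the paper: since $\varphi$ is itself one of the functions appearing in the maximum defining $D_X$, the bound $|\varphi(x_1)-\varphi(x_2)|\le D_X(x_1,x_2)$ is immediate, and continuity follows from non-expansiveness. Your added remark about well-posedness of the maximum via compactness of $\varPhi$ is consistent with what the paper already notes when defining $D_X$.
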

\begin{proof}
If $x_1,x_2\in X$, then for any function $\varphi \in \Phi$ we have that
$|\varphi(x_1)-\varphi(x_2)| \le \max_{\varphi \in \Phi}|\varphi(x_1)-\varphi(x_2)|=D_X(x_1,x_2)$.
\end{proof}
We now recall that the initial topology $\tau_\text{in}$ on $X$ with respect to $\Phi$ is the coarsest topology on $X$ such that each function $\varphi$ in $\Phi$ is continuous.
From Proposition \ref{pp2} and the definition of $\tau_\text{in}$, it follows that $ \tau_\text{in} \subseteq \tau_{\Delta_X}$.
{We already know that $\tau_{\Delta_X}\subseteq \tau_{D_X}$.}
Since $\Phi$ is compact, we know that $\tau_\text{in}$ and $ \tau_{D_X}$ are the same (see Theorem 2.1 in \cite{BFGQ19}). Hence, $\tau_{D_X} = \tau_{\Delta_X} = \tau_\text{in}$.

Before proceeding, we recall the following lemma (see \cite{Ga64}):
\begin{lemma}
	\label{TOTBOUNDLEM}
	Let $(P,d)$ be a pseudo-metric space. The following conditions are equivalent:
	\begin{enumerate}
		\item $P$ is totally bounded;
		\item every sequence in $P$ admits a Cauchy subsequence.
	\end{enumerate}
	
\end{lemma}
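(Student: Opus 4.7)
The plan is to prove the two implications separately, keeping in mind that the pseudo-metric nature of $d$ causes no real difficulty since both arguments involve only distances, not the separation of points.

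For $(1)\Rightarrow(2)$, I would use a diagonal extraction. Assume $P$ is totally bounded and let $(x_k)$ be an arbitrary sequence in $P$. For each integer $n\ge 1$, $P$ can be covered by finitely many open balls of radius $1/n$. By the pigeonhole principle, one of the balls of radius $1$ contains $x_k$ for infinitely many indices $k$; extract a corresponding subsequence $(x_k^{(1)})$. Repeating the argument inside $(x_k^{(1)})$ with balls of radius $1/2$ gives a further subsequence $(x_k^{(2)})$ lying in a single ball of radius $1/2$, and so on. The diagonal subsequence $(x_k^{(k)})$ has the property that for $k,\ell\ge n$ both terms lie in a ball of radius $1/n$, so $d(x_k^{(k)},x_\ell^{(\ell)})<2/n$; hence it is Cauchy.

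For $(2)\Rightarrow(1)$, I would argue by contrapositive. Suppose $P$ is not totally bounded; then there is some $\varepsilon>0$ such that $P$ cannot be covered by finitely many open balls of radius $\varepsilon$. I will build recursively a sequence $(x_k)$ in $P$ satisfying $d(x_j,x_k)\ge\varepsilon$ for all $j<k$. Pick $x_1\in P$ arbitrarily, and having chosen $x_1,\dots,x_n$, observe that $\bigcup_{i=1}^n B(x_i,\varepsilon)\ne P$, so some point of $P$ lies outside this union; take it to be $x_{n+1}$. Any subsequence of $(x_k)$ then has consecutive terms at distance at least $\varepsilon$, so no subsequence can be Cauchy, contradicting (2).

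The main potential obstacle is the $(2)\Rightarrow(1)$ direction, specifically making sure the inductive construction of an $\varepsilon$-separated sequence is carried out carefully — one must invoke the failure of every finite collection of $\varepsilon$-balls to cover $P$ at each inductive step. Apart from this, the pseudo-metric setting is harmless: total boundedness and the Cauchy property depend only on the values of $d$, and the fact that $d$ may vanish on distinct pairs never enters the argument. I would therefore expect the proof to mirror the classical metric-space statement essentially verbatim.
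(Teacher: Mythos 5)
Your proof is correct: the diagonal extraction for $(1)\Rightarrow(2)$ and the contrapositive construction of an $\varepsilon$-separated sequence for $(2)\Rightarrow(1)$ are both carried out properly, and you are right that the pseudo-metric setting changes nothing since only values of $d$ are used. The paper itself offers no proof of this lemma---it is recalled with a citation to the reference [Ga64]---and your argument is precisely the standard one that reference would supply, so there is nothing to reconcile beyond noting that you supplied the details the paper omits.
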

The definition of the pseudo-metric $D_X$ on $X$ relies on $\Phi$. Thus, properties on $\Phi$ naturally induce properties on $X$, as shown in the proof of the following statement.
\begin{proposition}
	\label{thmXTOTBOUND}
	$X$ is totally bounded with respect to $D_X$ and $\Delta_X$.
\end{proposition}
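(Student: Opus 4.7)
The plan is to prove total boundedness with respect to $D_X$ first, since the inequality $\Delta_X \le D_X$ (already noted in the text) immediately implies that any finite $\varepsilon$-net for $D_X$ is also a finite $\varepsilon$-net for $\Delta_X$. So the real content is total boundedness for $D_X$.

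To exploit Lemma \ref{TOTBOUNDLEM}, I would pass through an embedding into $C(\varPhi)$. Concretely, for each $x\in X$ consider the evaluation map $e_x:\varPhi\to\mathbb{R}$, $e_x(\varphi)=\varphi(x)$. Because $\varPhi$ is compact in $V$, the maximum in the definition of $D_X$ is attained and
\[
D_X(x_1,x_2)=\max_{\varphi\in\varPhi}|\varphi(x_1)-\varphi(x_2)|=\lVert e_{x_1}-e_{x_2}\rVert_{C(\varPhi)}.
\]
So the map $x\mapsto e_x$ is an isometry from $(X,D_X)$ into $(C(\varPhi),\lVert\cdot\rVert_\infty)$. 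Thus it is enough to check that the image $\{e_x:x\in X\}$ is relatively compact in $C(\varPhi)$, since then any sequence $(x_n)$ in $X$ has a subsequence $(x_{n_k})$ for which $(e_{x_{n_k}})$ is Cauchy in $\lVert\cdot\rVert_\infty$, which is the same as saying $(x_{n_k})$ is Cauchy in $D_X$; Lemma \ref{TOTBOUNDLEM} then gives total boundedness.

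For relative compactness I would invoke Arzelà--Ascoli on the compact space $\varPhi$. Uniform boundedness is immediate: since $\varPhi$ is compact in $(V,\lVert\cdot\rVert_V)$ and $\lVert\cdot\rVert_V,\lVert\cdot\rVert_\infty$ are equivalent, there is $M>0$ with $\lVert\varphi\rVert_\infty\le M$ for all $\varphi\in\varPhi$, so $|e_x(\varphi)|\le M$ for every $x\in X$ and $\varphi\in\varPhi$. Equicontinuity is also uniform: for any $\varphi_1,\varphi_2\in\varPhi$ and any $x\in X$,
\[
|e_x(\varphi_1)-e_x(\varphi_2)|=|\varphi_1(x)-\varphi_2(x)|\le\lVert\varphi_1-\varphi_2\rVert_\infty\le\tfrac{1}{\alpha}\lVert\varphi_1-\varphi_2\rVert_V,
\]
so the family $\{e_x\}$ is even uniformly $\tfrac{1}{\alpha}$-Lipschitz with respect to $\lVert\cdot\rVert_V$. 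Arzelà--Ascoli then yields that $\{e_x:x\in X\}$ is relatively compact in $C(\varPhi)$, completing the argument for $D_X$, and hence also for $\Delta_X$.

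There is no serious obstacle here; the only point that must be handled carefully is to verify that $D_X$ coincides with the sup-norm distance of the evaluation maps (which uses compactness of $\varPhi$ to replace $\sup$ with $\max$) and to check both hypotheses of Arzelà--Ascoli uniformly in $x$, both of which follow from the compactness of $\varPhi$ and the equivalence of $\lVert\cdot\rVert_V$ and $\lVert\cdot\rVert_\infty$.
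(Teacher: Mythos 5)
Your proof is correct, but it follows a genuinely different route from the paper. The paper works by hand: it fixes $\varepsilon>0$, takes a finite $\varepsilon$-net $\varPhi_\varepsilon=\{\varphi_1,\dots,\varphi_n\}$ of $\varPhi$, iteratively extracts (via Bolzano--Weierstrass) a subsequence $(x_{p_j})$ along which each $\varphi_k(x_{p_j})$ is Cauchy, and then transfers this to all of $\varPhi$ by the triangle inequality through the net, obtaining $D_X(x_{p_r},x_{p_s})\le 3\varepsilon$ and invoking Lemma \ref{TOTBOUNDLEM}. You instead embed $(X,D_X)$ isometrically into $C(\varPhi)$ via evaluation maps $e_x$ and apply Arzel\`a--Ascoli, using uniform boundedness and the uniform $\tfrac{1}{\alpha}$-Lipschitz estimate; both hypotheses are verified correctly, and the reduction of the $\Delta_X$ case via $\Delta_X\le D_X$ matches the paper. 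Your argument is shorter and conceptually cleaner, and it yields outright a convergent subsequence of $(e_{x_n})$, whereas the paper's extraction, taken literally, gives for each fixed $\varepsilon$ a subsequence that is eventually $3\varepsilon$-close and implicitly leans on a further diagonal step. The trade-off is generality: as Remark \ref{remPhitotbounded} stresses, the paper's proof uses only the total boundedness of $\varPhi$, while your route needs its full compactness, both to replace the supremum by a maximum in $D_X=\lVert e_{x_1}-e_{x_2}\rVert_{C(\varPhi)}$ and, more essentially, to apply Arzel\`a--Ascoli on the domain $\varPhi$; so the strengthened statement recorded in that remark would not follow directly from your argument. In the compact setting actually assumed in the paper, your proof is a valid and arguably more transparent alternative.
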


\begin{proof}
{Because of Lemma \ref{TOTBOUNDLEM}, in order to prove that $X$ is totally bounded with respect to $D_X$ we have just to prove that every sequence $(x_i)_{i\in \mathbb{N}}$ in $X$ admits a Cauchy subsequence with respect to $D_X$.}
Let us consider an arbitrary sequence $(x_i)_{i \in \mathbb{N}}$ in $X$ and an arbitrarily small $\varepsilon > 0$. Since $\Phi$ is totally bounded, we can find a finite subset $\Phi_{\varepsilon} = \{ \varphi_1, \dots , \varphi_n \}$ of $\Phi$, such that $\Phi = \bigcup_{i=1}^n B_{\Phi}(\varphi_i, \varepsilon)$, where
$B_{\Phi}(\varphi,\varepsilon)=\{\varphi' \in \Phi : {\lVert \varphi'-\varphi\rVert_{\infty} }< \varepsilon \}$.
As a consequence, if $\varphi \in \Phi$, then there exists $\varphi_{\bar k} \in \Phi_\varepsilon$ such that $\|\varphi - \varphi_{\bar k}\|_{\infty} < \varepsilon$.
Now, we consider the real sequence $(\varphi_1(x_i))_{i\in \mathbb{N}}$, \textcolor{black}{ which} is bounded in $\mathbb{R}$ because all the functions in $\Phi$ are bounded. From the Bolzano-Weierstrass Theorem
it follows that we can extract a Cauchy subsequence $\left(\varphi_1(x_{i_h})\right)_{h\in \mathbb{N}}$. Then we consider the sequence
$\left(\varphi_2(x_{i_h})\right)_{h\in \mathbb{N}}$. Since $\varphi_2$ is bounded, we can extract a Cauchy subsequence $\left(\varphi_2(x_{i_{h_t}})\right)_{t\in \mathbb{N}}$.
We can repeat the same argument for any $\varphi_k \in \Phi_{\varepsilon}$.
Thus, we obtain a subsequence $(x_{p_{j}})_{j\in \mathbb{N}}$ of $(x_i)_{i\in \mathbb{N}}$, such that $\left(\varphi_k(x_{p_j})\right)_{i\in \mathbb{N}}$ is a Cauchy sequence
for any $k \in \{1, \dots, n \}$. Moreover, since $\Phi_{\varepsilon}$ is a finite set, there exists an index $\bar{\jmath}$ such that for any $k \in \{1, \dots, n \}$ we have that
	\begin{equation}
		|\varphi_k(x_{p_r}) - \varphi_k(x_{p_s})| < \varepsilon, \ \ \text{for all} \ r,s \geq \bar{\jmath}.
	\end{equation}
	We observe that $\bar{\jmath}$ does not depend on $\varphi$, but only on $\varepsilon$ and $\Phi_{\varepsilon}$.
	
	In order to prove that $(x_{p_{j}})_{j \in \mathbb{N}}$ is a Cauchy sequence in $X$ with respect to $D_X$, we observe that for any $r,s \in \mathbb{N}$ the following inequalities hold \textcolor{black}{for any $\varphi\in\Phi$}:
	\begin{align}
		|\varphi(x_{p_r}) - \varphi(x_{p_s})| &= |\varphi(x_{p_r}) - \varphi_{\bar k}(x_{p_r}) + \varphi_{\bar k}(x_{p_r}) - \varphi_{\bar k}(x_{p_s}) + \varphi_{\bar k}(x_{p_s}) - \varphi(x_{p_s})| \nonumber\\
		& \leq  |\varphi(x_{p_r}) - \varphi_{\bar k}(x_{p_r})| + |\varphi_{\bar k}(x_{p_r}) - \varphi_{\bar k}(x_{p_s})| + |\varphi_{\bar k}(x_{p_s}) - \varphi(x_{p_s})| \nonumber\\
		& \leq  \|\varphi - \varphi_{\bar k}\|_{\infty} + |\varphi_{\bar k}(x_{p_r}) - \varphi_{\bar k}(x_{p_s})| + \|\varphi_{\bar k} - \varphi\|_{\infty}.
	\end{align}
It follows that $|\varphi(x_{p_r}) - \varphi(x_{p_s})| < 3\varepsilon$ for every $\varphi \in \Phi$ and every $r,s \geq \bar{\jmath}$.
Thus, \textcolor{black}{$\max_{\varphi \in \Phi}|\varphi(x_{p_r}) - \varphi(x_{p_s})| = D_X(x_{p_r},x_{p_s}) \le 3\varepsilon$} for any $r,s \geq \bar{\jmath}$. Hence, the sequence $(x_{p_j})_{j \in \mathbb{N}}$ is a Cauchy sequence in $X$ with respect to $D_X$.
This proves that $X$ is totally bounded with respect to $D_X$. Since $\Delta_X\le D_X$, $X$ is also totally bounded with respect to $\Delta_X$.
\end{proof}

\begin{remark}
\label{remPhitotbounded}
The proof of Proposition \ref{thmXTOTBOUND} just relies on the assumption that $\Phi$ is totally bounded, without using the compactness of that space.
\end{remark}

\begin{corollary}
If $X$ is complete, then $X$ is compact.
\end{corollary}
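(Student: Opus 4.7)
The plan is to combine the total boundedness established in Proposition \ref{thmXTOTBOUND} with the assumed completeness of $X$, applying the standard characterization that a (pseudo-)metric space is compact if and only if it is complete and totally bounded. Since the ambient topology on $X$ is the one induced by $\Delta_X$, I interpret completeness as completeness with respect to $\Delta_X$.

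Concretely, I would proceed as follows. First, let $(x_i)_{i\in\mathbb{N}}$ be an arbitrary sequence in $X$. By Proposition \ref{thmXTOTBOUND}, $X$ is totally bounded with respect to $\Delta_X$, so by the equivalence stated in Lemma \ref{TOTBOUNDLEM}, the sequence admits a Cauchy subsequence $(x_{i_k})_{k\in\mathbb{N}}$ with respect to $\Delta_X$. By the completeness hypothesis, this Cauchy subsequence converges in $X$. Hence every sequence in $X$ has a convergent subsequence, i.e.\ $X$ is sequentially compact with respect to $\Delta_X$.

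Finally, I would invoke the fact that a pseudo-metric space is compact if and only if it is sequentially compact. This is the usual argument: pseudo-metric spaces are first countable, and the classical proof that sequential compactness implies compactness in metric spaces (via the Lebesgue number lemma plus total boundedness, or directly via the existence of a countable base coming from a countable dense-modulo-the-pseudo-metric subset) carries over verbatim to the pseudo-metric setting because only the triangle inequality and symmetry are used, not $d(x,y)=0\Rightarrow x=y$. Hence $X$ is compact.

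There is really no obstacle here beyond a sanity check that the classical \emph{complete $+$ totally bounded $\Rightarrow$ compact} theorem does not secretly use the $T_0$ separation property of a genuine metric; it does not, so the step goes through. The rest is a direct quotation of the previous proposition and lemma.
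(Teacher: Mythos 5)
Your proof is correct and follows the same route as the paper: invoke Proposition \ref{thmXTOTBOUND} for total boundedness and then the standard fact that a pseudo-metric space is compact if and only if it is complete and totally bounded (which the paper simply cites from the literature, while you sketch its proof via sequential compactness). The only slight addition is your explicit choice of $\Delta_X$ for the completeness hypothesis, which is harmless since $\tau_{D_X}=\tau_{\Delta_X}$.
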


\begin{proof}
It follows from Proposition \ref{thmXTOTBOUND}, by recalling that in pseudo-metric spaces a set is compact if and only if it is complete and totally bounded \cite{Ga64}.
\end{proof}

\subsection{$\Phi$-preserving homeomorphisms.}

We know that the elements of $\mathrm{Aut}_{\Phi}(X)$ are isometries with respect to $D_X$, and hence homeomorphisms with respect to $D_X$ (cf. \cite{BFGQ19}).
In the following, we show that each element of $G$ is also an isometry, and hence a homeomorphism, with respect to $\Delta_X$.

\begin{proposition}\label{pp1}
Each $g\in G$ is an isometry with respect to $\Delta_X$.
\end{proposition}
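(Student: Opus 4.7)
The plan is to unpack $\Delta_X(g(x_1),g(x_2))$ according to the defining integral and then transport the integrand back to $\Delta_X(x_1,x_2)$ by a change of variables on $\varPhi$ driven by $\rho_g$, exploiting the two invariance hypotheses built into the definition of a probability inner product space with equivariance group $G$.

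Concretely, I would start from
\begin{align*}
\Delta_X(g(x_1),g(x_2))&=\int_{\varPhi}|\varphi(g(x_1))-\varphi(g(x_2))|\,f(\varphi)\,d\lambda(\varphi)\\
&=\int_{\varPhi}|(\rho_g(\varphi))(x_1)-(\rho_g(\varphi))(x_2)|\,f(\varphi)\,d\lambda(\varphi),
\end{align*}
where the second equality is just the definition $\rho_g(\varphi)=\varphi g$ together with associativity of composition. Now I would substitute the invariance of $f$, namely $f(\varphi)=f(\rho_g(\varphi))$, to rewrite the integrand as $h(\rho_g(\varphi))$ with $h(\psi):=|\psi(x_1)-\psi(x_2)|\,f(\psi)$.

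The core step is then a change of variables $\psi=\rho_g(\varphi)$. Since $\rho_g\colon\varPhi\to\varPhi$ is a homeomorphism (proved in the introductory subsection) it is a measurable bijection, and the hypothesis $\lambda(A)=\lambda(\rho_g(A))$ for every Borel set $A$ says exactly that the pushforward $(\rho_g)_*\lambda$ agrees with $\lambda$ on $\varPhi$. Applying the abstract change of variables formula for pushforward measures yields
$$\int_{\varPhi}h(\rho_g(\varphi))\,d\lambda(\varphi)=\int_{\varPhi}h(\psi)\,d\lambda(\psi)=\Delta_X(x_1,x_2),$$
which combined with the previous display gives $\Delta_X(g(x_1),g(x_2))=\Delta_X(x_1,x_2)$, i.e.\ $g$ is an isometry for $\Delta_X$.

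The only mildly delicate point is the change of variables: one must be sure that $\rho_g$ is Borel measurable (which follows from its continuity, already established) and that the invariance hypothesis on $\lambda$ — phrased as $\lambda(A)=\lambda(\rho_g(A))$ for all Borel $A$ and all $g\in G$ — really gives $(\rho_g)_*\lambda=\lambda$. This is a cosmetic issue resolved by applying the hypothesis with $g^{-1}$ in place of $g$, since $G$ is a group. Once this is recorded, everything else is a single substitution and the integrability of $\xi_{x_1,x_2}$ already granted by Lemma \ref{Lemma1}. No continuity of $g$ on $X$ itself is needed, nor any appeal to compactness of $\varPhi$.
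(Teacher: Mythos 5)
Your proof is correct and follows essentially the same route as the paper: both arguments rewrite the integrand using the $G$-invariance of $f$ and then perform a change of variables on $\varPhi$ justified by the $G$-invariance of $\lambda$ (the paper phrases this via the auxiliary measure $\lambda_g(A)=\lambda(\rho_g(A))$ and the identity $\lambda_g=\lambda$, which is exactly your pushforward statement $(\rho_g)_*\lambda=\lambda$). Your additional remarks on measurability of $\rho_g$ and on applying the invariance hypothesis to $g^{-1}$ only make explicit what the paper leaves implicit.
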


\begin{proof}
Let $\nu_g$ be the Borel measure on $V$ defined by setting \textcolor{black}{$\nu_g(A)=\nu(\rho_g^{-1}(A))$} for any Borel set $A$ in $V$ (recall that $\rho_g$ and $\rho_g^{-1}$ take Borel sets to Borel sets).
From the invariance of $\nu$ under the action of $G$, $\nu_g=\nu$.
By applying a change of variable, the invariance of $f$ under the action of each $g\in G$ implies that
\textcolor{black}{\begin{align*}
\Delta_X(g(x_1),g(x_2))&=\int_{\Phi}|\varphi g(x_1)-\varphi  g(x_2)|f(\varphi)\,d\nu\\
&=\int_{\Phi}|\varphi g(x_1)-\varphi  g(x_2)|f(\varphi g)\,d\nu\\
&=\int_{\Phi}|\varphi(x_1)-\varphi(x_2)|f(\varphi)\,d\nu_g\\
&=\int_{\Phi}|\varphi(x_1)-\varphi(x_2)|f(\varphi)\,d\nu=\Delta_X(x_1,x_2)
\end{align*}}
for any $x_1,x_2\in X$.
\end{proof}

Now, we turn $G$ into a pseudo-metric space by using \textcolor{black}{$\nu$ and $\|\cdot\|_V$.} To do this, we need the following lemma.

\begin{lemma}\label{lemmaxig1g2}
Let $g_1,g_2\in G$. Then, the map $\xi_{g_1,g_2}:\Phi\to\mathbb{R}$ sending $\varphi$ to $\lVert\varphi  g_1-\varphi  g_2\rVert_V$ is a continuous map with respect to \textcolor{black}{the topology $\tau_\Phi$ on $\Phi$ induced by the topology $\tau_V$ on $V$,} and it is an integrable random variable with respect to $\nu$.
\end{lemma}
\begin{proof}
Let $\varphi_1,\varphi_2\in\Phi$. Since $\lVert \cdot \rVert_V$ is invariant {under the action of $G$}, we have that

\begin{align*}
\left|\xi_{g_1,g_2}(\varphi_1)-\xi_{g_1,g_2}(\varphi_2)\right|&=\left|\lVert\varphi_1 g_1-\varphi_1 g_2\rVert_V-\lVert\varphi_2  g_1-\varphi_2  g_2\rVert_V\right|\\
&\le\lVert(\varphi_1  g_1-\varphi_1  g_2)-(\varphi_2  g_1-\varphi_2  g_2)\rVert_V\\
&=\lVert(\varphi_1  g_1-\varphi_2  g_1)-(\varphi_1  g_2-\varphi_2  g_2)\rVert_V\\
&\le\lVert\varphi_1  g_1-\varphi_2  g_1\rVert_V+\lVert\varphi_1 g_2-\varphi_2  g_2)\rVert_V=2\lVert\varphi_1-\varphi_2\rVert_V.
\end{align*}
Therefore, $\xi_{g_1,g_2}$
\textcolor{black}{
is continuous with respect to the topology $\tau_\Phi$ induced by the topology $\tau_V$ defined in point (\ref{coeffalpha}) of Subsection \ref{frame}.
Now, since $\xi_{g_1,g_2}$ is continuous on the compact space $\Phi$, it is bounded. \textcolor{black}{Moreover, the counterimage $\xi_{x_1,x_2}^{-1}(U)$ of each open set $U\subseteq \R$ is an open set (and hence a Borel set) in $\Phi$.
It follows that the counterimage of each Borel set is a Borel set. This proves that the function $\xi_{g_1,g_2}$ is measurable, since the restriction of $\nu$ to $\Phi$ is still a Borel measure.
Hence, recalling that the measure $\nu$ is finite}, if $M$ is \textcolor{black}{an upper bound for $|\xi_{g_1,g_2}|$}, then we have that
\[\int_{\Phi}\left|\xi_{g_1,g_2}(\varphi)\right|\,d\nu\le M\nu(\Phi)<\infty.\] }
\end{proof}

Now, we can define the following pseudo-metric on $G$:
$$\Delta_G(g_1,g_2)=\int_{\Phi}\lVert\varphi  g_1-\varphi  g_2\rVert_V f(\varphi)\,d\nu,\quad \forall\,g_1,g_2\in G.$$


In \cite{BFGQ19} a different pseudo-metric $D_G$ on $G$ has been considered, defined by setting
$D_G(g_1,g_2):= \max_{\varphi \in \Phi}{\lVert\varphi g_1-\varphi g_2\rVert_{\infty}}$.
The definition of $D_G$ is well-posed, since $\Phi$ is compact.
We observe that
\begin{align*}
 \Delta_G(g_1,g_2)
 &=\int_{\Phi}\lVert\varphi  g_1-\varphi  g_2\rVert_V f(\varphi)\,d\nu \\
 &\le \int_{\Phi}\beta\lVert\varphi  g_1-\varphi  g_2\rVert_\infty f(\varphi)\,d\nu \\
 &\le \int_{\Phi}\beta D_G(g_1,g_2) f(\varphi)\,d\nu\\
 &= \beta D_G(g_1,g_2)\int_{\Phi} f(\varphi)\,d\nu\\
 &= \beta D_G(g_1,g_2)
 \end{align*}
for every $g_1,g_2\in G$, \textcolor{black}{
where $\beta$ is the coefficient defined in point (\ref{coeffalpha}) of Subsection \ref{frame}.}
The inequality $\Delta_G\le \beta D_G$ implies that the topology $\tau_{D_G}$ induced by $D_G$ is finer than the topology $\tau_{\Delta_G}$ induced by $\Delta_G$.
We recall that if $(G, D_G)$ is complete, {then it is compact \cite[Theorem 3]{BFGQ19}.}

\textcolor{black}{For the remainder of the paper}, whenever not differently specified, we consider $G$ as a pseudo-metric space (and hence a topological space) with respect to $\Delta_G$.

\begin{lemma}\label{ipghjeruioh}
	Let $g_1,g_2,g_3 \in G$. We have that
	$$\Delta_G (g_1,g_2)=\Delta_G (g_1 g_3,g_2 g_3)= \Delta_G (g_3 g_1,g_3 g_2).$$
\end{lemma}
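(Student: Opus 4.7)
My plan is to treat the two equalities separately, since they rely on different ingredients of the setting.

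For the right-invariance $\Delta_G(g_1,g_2) = \Delta_G(g_1 g_3, g_2 g_3)$, the argument is purely algebraic and needs no change of variable. Since composition on the right is linear, we have
\[
\varphi g_1 g_3 - \varphi g_2 g_3 \;=\; (\varphi g_1 - \varphi g_2)\,g_3 \;=\; \rho_{g_3}(\varphi g_1 - \varphi g_2)
\]
pointwise in $\varphi$. Condition (1) of Section \ref{frame} gives $\lVert \rho_{g_3}(\cdot)\rVert_V = \lVert \cdot \rVert_V$, so the integrand defining $\Delta_G(g_1 g_3, g_2 g_3)$ reduces to $\lVert\varphi g_1 - \varphi g_2\rVert_V\,f(\varphi)$, and integrating over $\varPhi$ with respect to $\lambda$ yields exactly $\Delta_G(g_1,g_2)$.

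For the left-invariance $\Delta_G(g_1,g_2) = \Delta_G(g_3 g_1, g_3 g_2)$, I would mimic the change-of-variable argument already used in the proof of Proposition \ref{pp1}. Starting from
\[
\Delta_G(g_3 g_1, g_3 g_2) \;=\; \int_\varPhi \lVert\varphi g_3 g_1 - \varphi g_3 g_2\rVert_V\, f(\varphi)\, d\lambda,
\]
I would substitute $\psi = \varphi g_3 = \rho_{g_3}(\varphi)$. Since $g_3 \in \mathrm{Aut}_\varPhi(X)$, the map $\rho_{g_3}\colon\varPhi\to\varPhi$ is a Borel homeomorphism. Introducing the auxiliary measure $\lambda_{g_3}(A) = \lambda(\rho_{g_3}(A))$, the invariance of $\lambda$ under $G$ gives $\lambda_{g_3} = \lambda$; combining this with the identity $f(\varphi g_3) = f(\varphi)$ from condition (2), the change-of-variable formula transforms the right hand side into $\int_\varPhi \lVert\psi g_1 - \psi g_2\rVert_V\, f(\psi)\, d\lambda$, which is $\Delta_G(g_1,g_2)$.

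The only step requiring any care is this change-of-variable manipulation, which is the only place where both hypotheses on $\lambda$ and $f$ are used. It is formally identical to the computation carried out in Proposition \ref{pp1}, with the scalar integrand $|\varphi(x_1)-\varphi(x_2)|$ replaced by the vector-norm integrand $\lVert\varphi g_1 - \varphi g_2\rVert_V$, so I would either repeat the few-line computation or refer the reader back to that earlier argument.
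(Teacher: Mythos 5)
Your proposal is correct and follows essentially the same route as the paper: the first equality via $G$-invariance of $\lVert\cdot\rVert_V$ (the paper leaves the linearity of $\rho_{g_3}$ implicit, which you spell out), and the second via the same change of variable with $\lambda_{g_3}=\lambda$ and $f(\varphi g_3)=f(\varphi)$, exactly as in Proposition \ref{pp1}.
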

\begin{proof}
The first equality follows directly from the invariance of the norm $\lVert \cdot \rVert_V$ {under the action of $G$}.
Now we show that $\Delta_G (g_1,g_2)= \Delta_G (g_3 g_1,g_3 g_2)$. By applying a change of variable, the invariance of $\nu$ and $f$ under the action of $G$ implies that
\textcolor{black}{\begin{align*}
\Delta_G (g_3 g_1,g_3 g_2)
 &= \int_{\Phi} \lVert \varphi g_3 g_1 - \varphi g_3 g_2 \rVert_V f(\varphi) \,d\nu \\
 &= \int_{\Phi} \lVert \varphi g_3 g_1 - \varphi g_3 g_2 \rVert_V f(\varphi g_3) \,d\nu \\
 &= \int_{\Phi} \lVert \varphi g_1 - \varphi g_2 \rVert_V f(\varphi) \,d\nu_{g_3} \\
 &= \int_{\Phi} \lVert \varphi g_1 - \varphi g_2 \rVert_V f(\varphi) \,d\nu=\Delta_G(g_1,g_2)
\end{align*}}
where $\nu_{g_3}$ is the Borel measure on $V$ defined by setting \textcolor{black}{$\nu_{g_3}(A)=\nu(\rho_{g_3}^{-1}(A))$} for any Borel set $A$ in $V$ {\color{black}(recall that $\rho_g$ and $\rho_g^{-1}$ take Borel sets to Borel sets).}
\end{proof}

{\color{black}
\begin{remark}\label{remgaction}
Lemma \ref{ipghjeruioh} implies that the actions of $G$ on $G$ respectively taking $g$ to $g\bar g$ and $\bar g g$ are isometries for every $\bar g\in G$, with respect to $\Delta_{G}$.
\end{remark}
}

\begin{proposition}
The group $G$ is a topological group. Further, the action of $G$ on $\Phi$ by composition on the right is continuous.
\end{proposition}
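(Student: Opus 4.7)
The proof naturally splits into three tasks: continuity of multiplication on $G$, continuity of inversion on $G$, and joint continuity of the right action $G \times \varPhi \to \varPhi$. The first two follow quickly from the bi-invariance of $\Delta_G$ stated in Lemma \ref{ipghjeruioh}. The substantive work lies in the third, where the difficulty is to control a \emph{pointwise} norm by the \emph{integrated} pseudo-distance $\Delta_G$.

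\textbf{Topological group structure.} For multiplication, inserting $g_1 g_2'$ into $\Delta_G(g_1 g_2, g_1' g_2')$ and applying Lemma \ref{ipghjeruioh} twice yields
$$\Delta_G(g_1 g_2, g_1' g_2') \le \Delta_G(g_1 g_2, g_1 g_2') + \Delta_G(g_1 g_2', g_1' g_2') = \Delta_G(g_2, g_2') + \Delta_G(g_1, g_1'),$$
so multiplication is in fact uniformly continuous. For inversion, using Lemma \ref{ipghjeruioh} to left-multiply by $g$ gives $\Delta_G(g^{-1}, h^{-1}) = \Delta_G(e, g h^{-1})$, while right-multiplying by $h^{-1}$ gives $\Delta_G(g, h) = \Delta_G(g h^{-1}, e)$; by symmetry these quantities agree, hence $\Delta_G(g^{-1}, h^{-1}) = \Delta_G(g, h)$ and inversion is an isometry.

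\textbf{Action, reduction to the orbit map.} For joint continuity of $(g, \varphi) \mapsto \varphi g$ at a point $(g_0, \varphi_0)$, the triangle inequality together with the $G$-invariance of $\lVert \cdot \rVert_V$ yields
$$\lVert \varphi g - \varphi_0 g_0 \rVert_V \le \lVert \varphi g - \varphi_0 g \rVert_V + \lVert \varphi_0 g - \varphi_0 g_0 \rVert_V = \lVert \varphi - \varphi_0 \rVert_V + \lVert \varphi_0 g - \varphi_0 g_0 \rVert_V.$$
The first term vanishes as $\varphi \to \varphi_0$ in $V$, so the task reduces to proving that the orbit map $g \mapsto \varphi_0 g$ is continuous at $g_0$ from $(G, \Delta_G)$ into $(\varPhi, \lVert \cdot \rVert_V)$.

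\textbf{From integrated to pointwise control.} This last reduction is the main obstacle, and I would resolve it by mimicking the argument of Proposition \ref{pp2}. The lemma immediately preceding the proposition provides the Lipschitz estimate $|\xi_{g, g_0}(\varphi) - \xi_{g, g_0}(\varphi')| \le 2 \lVert \varphi - \varphi' \rVert_V$ with constant independent of $g$. Setting $c = \lVert \varphi_0 g - \varphi_0 g_0 \rVert_V = \xi_{g, g_0}(\varphi_0)$, this forces $\xi_{g, g_0}(\varphi) \ge c/2$ on the ball $B = B_\varPhi(\varphi_0, c/4)$, and therefore
$$\Delta_G(g, g_0) \ge \int_B \xi_{g, g_0}(\varphi)\, f(\varphi)\, d\lambda \ge \frac{c}{2}\, \mu(B).$$
Because $\varphi_0 \in \varPhi = \mathrm{supp}(\mu)$, the quantity $\mu(B_\varPhi(\varphi_0, \eta))$ is strictly positive for every $\eta > 0$ and is non-decreasing in $\eta$; hence, given $\varepsilon > 0$, the choice $\delta = (\varepsilon/2)\, \mu(B_\varPhi(\varphi_0, \varepsilon/4)) > 0$ turns the contrapositive of the last display into the implication $\Delta_G(g, g_0) < \delta \Longrightarrow \lVert \varphi_0 g - \varphi_0 g_0 \rVert_V < \varepsilon$, which completes the proof.
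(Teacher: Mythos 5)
Your proposal is correct and takes essentially the same route as the paper: the group operations are handled via the bi-invariance of $\Delta_G$ from Lemma \ref{ipghjeruioh}, and joint continuity of the action is reduced to bounding the pointwise quantity $\lVert\varphi_0 g-\varphi_0 g_0\rVert_V$ by integrating the $2$-Lipschitz map $\varphi\mapsto\lVert\varphi g-\varphi g_0\rVert_V$ over a small ball centred at $\varphi_0$, whose $\mu$-measure is positive because $\varphi_0$ lies in the support of $\mu$. Your contrapositive formulation with the ball of radius $c/4$ and the paper's direct estimate on a fixed $\varepsilon/4$-ball differ only cosmetically.
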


\begin{proof}
First, we show that $G$ is a topological group. Let $\sigma:G\times G\to G$ and $\iota:G\to G$ be the composition and the inverse maps, respectively. We consider the product topology on $G\times G$. We must show that $\sigma$ and $\iota$ are continuous. To show that $\sigma$ is continuous, let $(g_1,g_2),(g'_1,g'_2)\in G \times G$. Using Lemma \ref{ipghjeruioh}, we have that
\begin{align*}
\Delta_G(g_1  g_2,g'_1 g'_2)&=\Delta_G (g_1,g'_1 g'_2 g_2^{-1})\\
 & \le \Delta_G (g_1,g'_1) + \Delta_G (g'_1,g'_1 g'_2 g_2^{-1}) \\
 & = \Delta_G (g_1,g'_1) + \Delta_G (\id_X, g'_2 g_2^{-1}) \\
 & = \Delta_G (g_1,g'_1) + \Delta_G (g_2, g'_2).
\end{align*}
It follows that the composition map $\sigma$ is continuous. Now, we show that $\iota$ is continuous. Consider $h_1,h_2 \in G$. We have that
\begin{align*}
\Delta_G(h_1^{-1},h_2^{-1})  &= \Delta_G(h_1^{-1}h_2,h_2^{-1}h_2)
\\& = \Delta_G(h_1^{-1}h_2,\id_X)
\\& =\Delta_G(h_1^{-1}h_2,h_1^{-1}h_1)
\\& =\Delta_G(h_2,h_1)
\\& =\Delta_G(h_1,h_2).
\end{align*}
This proves that $\iota$ is an isometry, and hence it is continuous.

Therefore, $G$ is a topological group.

Let us now assume that $\rho:\Phi\times G\to\Phi$ is {\color{black} the action of $G$ on $\Phi$ by composition on the right} (i.e., $\rho(\varphi,g)=\varphi g$ for any $\varphi\in\Phi$ and $g\in G$). We have to prove that $\rho$ is continuous, when $\Phi\times G$ is endowed with the product topology. Let $(\varphi_1,g_1),(\varphi_2,g_2)\in\Phi\times G$ and $\varepsilon>0$. Let us define ${B_\varepsilon}$ as the ball $B_{\Phi}(\varphi_2,\frac{\varepsilon}{4})$ in $\Phi$ with respect to $\lVert\cdot\rVert_V$. {\color{black} Since $\varphi_2$ is in the support of $\mu$}, $\mu({B_\varepsilon})$ is positive.
We will show that if
$\lVert\varphi_1-\varphi_2\rVert_V\le \frac{\varepsilon}{4}$ and
$\Delta_G(g_1,g_2)\le \frac{\varepsilon}{4}\mu(B_\varepsilon)$,
then $\lVert \rho(\varphi_1,g_1)- \rho(\varphi_2,g_2)\rVert\le\varepsilon$.

{\color{black} Let us assume that $\lVert\varphi_1-\varphi_2\rVert_V\le \frac{\varepsilon}{4}$ and
$\Delta_G(g_1,g_2)\le \frac{\varepsilon}{4}\mu(B_\varepsilon)$. Recalling the invariance of $\|\cdot\|_V$ under the action of $G$ and the inequality
$\lVert\varphi -\varphi_2\rVert_V\le \frac{\varepsilon}{4}$, holding for every $\varphi\in B_\varepsilon$,} we have that
\begin{align*}
\Delta_G(g_1,g_2)&=\int_{\Phi}\lVert\varphi g_1-\varphi  g_2\rVert_V f(\varphi)\,d\nu\\
&\ge\int_{{B_\varepsilon}}\lVert\varphi  g_1-\varphi  g_2\rVert_V f(\varphi)\,d\nu\\
&=\int_{{B_\varepsilon}}\lVert(\varphi  g_1-\varphi_2  g_1)+(\varphi_2  g_1-\varphi_2  g_2)+(\varphi_2  g_2-\varphi g_2)\rVert_V f(\varphi)\,d\nu\\
&\ge\int_{{B_\varepsilon}}\left(\lVert\varphi_2  g_1-\varphi_2  g_2\rVert_V-\lVert\varphi  g_1-\varphi_2 g_1\rVert_V-\lVert\varphi_2  g_2-\varphi  g_2\rVert_V\right) f(\varphi)\,d\nu\\
&= \int_{{B_\varepsilon}} \left(\lVert\varphi_2 g_1-\varphi_2  g_2\rVert_V-2\lVert\varphi -\varphi_2\rVert_V\right) f(\varphi)\,d\nu\\
&\ge \mu({B_\varepsilon}) \left(\lVert\varphi_2 g_1-\varphi_2  g_2\rVert_V-\frac{\varepsilon}{2}\right).
\end{align*}
It follows that
 $\lVert\varphi_2  g_1-\varphi_2 g_2\rVert_V\le\frac{\Delta_G(g_1,g_2)}{\mu({B_\varepsilon})}+\frac{\varepsilon}{2}$.
Therefore,
 \begin{align*}
 \lVert \rho(\varphi_1,g_1)-\rho(\varphi_2,g_2)\rVert_V &=\lVert\varphi_1 g_1-\varphi_2 g_2\rVert_V \\
 & \le\lVert\varphi_1 g_1-\varphi_2 g_1\rVert_V +\lVert\varphi_2 g_1-\varphi_2 g_2\rVert_V\\
 & \le \lVert\varphi_1-\varphi_2\rVert_V +\frac{\Delta_G(g_1,g_2)}{\mu({B_\varepsilon})}+\frac{\varepsilon}{2}\\
 &\le \frac{\varepsilon}{4}+\frac{\varepsilon}{4}+\frac{\varepsilon}{2}=\varepsilon.
 \end{align*}
Consequently, $\rho$ is continuous.
\end{proof}

In order to study the compactness of $G$, we need the following result.
\begin{proposition}
\label{thmGcompact}
$G$ is totally bounded with respect to $D_G$ and $\Delta_G$.
\end{proposition}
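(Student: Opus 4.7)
The plan is to mirror the argument of Proposition \ref{thmXTOTBOUND}, replacing Bolzano--Weierstrass on $\mathbb{R}$ by sequential compactness of $\varPhi$, and then transfer the conclusion from $D_G$ to $\Delta_G$ via the inequality $\Delta_G \le \beta D_G$ already established. By Lemma \ref{TOTBOUNDLEM}, it suffices to show that every sequence $(g_i)_{i\in\mathbb{N}}$ in $G$ admits a $D_G$-Cauchy subsequence.

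Fix $\varepsilon>0$. Since $\varPhi$ is compact (hence totally bounded) in $\lVert\cdot\rVert_\infty$ (equivalent to $\lVert\cdot\rVert_V$), choose a finite $\varepsilon$-net $\varPhi_\varepsilon=\{\varphi_1,\ldots,\varphi_n\}\subseteq\varPhi$ such that for every $\varphi\in\varPhi$ there is $\varphi_{\bar k}\in\varPhi_\varepsilon$ with $\lVert\varphi-\varphi_{\bar k}\rVert_\infty<\varepsilon$. The sequence $(\varphi_1 g_i)_{i\in\mathbb{N}}$ lies in the sequentially compact space $\varPhi$ (recall that $g_i\in\mathrm{Aut}_\varPhi(X)$), so it admits an $\lVert\cdot\rVert_\infty$-convergent, hence Cauchy, subsequence. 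Iterating this extraction through $\varphi_2,\ldots,\varphi_n$ and diagonalizing yields a subsequence $(g_{p_j})_{j\in\mathbb{N}}$ of $(g_i)$ such that $(\varphi_k g_{p_j})_j$ is $\lVert\cdot\rVert_\infty$-Cauchy simultaneously for every $k=1,\ldots,n$. Since $\varPhi_\varepsilon$ is finite, there exists $\bar{\jmath}$ (depending only on $\varepsilon$ and $\varPhi_\varepsilon$) such that
\begin{equation*}
\lVert\varphi_k g_{p_r}-\varphi_k g_{p_s}\rVert_\infty<\varepsilon\quad\text{for all }k=1,\ldots,n\text{ and all }r,s\ge\bar{\jmath}.
\end{equation*}

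Now fix an arbitrary $\varphi\in\varPhi$ and pick $\varphi_{\bar k}\in\varPhi_\varepsilon$ with $\lVert\varphi-\varphi_{\bar k}\rVert_\infty<\varepsilon$. The key observation is that for any bijection $g\colon X\to X$ one has $\lVert\psi_1 g-\psi_2 g\rVert_\infty=\lVert\psi_1-\psi_2\rVert_\infty$, so right-composition by any $g_{p_j}$ is an $L^\infty$-isometry. Hence, for $r,s\ge\bar{\jmath}$, the triangle inequality gives
\begin{align*}
\lVert\varphi g_{p_r}-\varphi g_{p_s}\rVert_\infty
&\le \lVert\varphi g_{p_r}-\varphi_{\bar k} g_{p_r}\rVert_\infty + \lVert\varphi_{\bar k} g_{p_r}-\varphi_{\bar k} g_{p_s}\rVert_\infty + \lVert\varphi_{\bar k} g_{p_s}-\varphi g_{p_s}\rVert_\infty\\
&= \lVert\varphi-\varphi_{\bar k}\rVert_\infty + \lVert\varphi_{\bar k} g_{p_r}-\varphi_{\bar k} g_{p_s}\rVert_\infty + \lVert\varphi_{\bar k}-\varphi\rVert_\infty < 3\varepsilon.
\end{align*}
Since this bound is uniform in $\varphi\in\varPhi$, we conclude $D_G(g_{p_r},g_{p_s})=\sup_{\varphi\in\varPhi}\lVert\varphi g_{p_r}-\varphi g_{p_s}\rVert_\infty\le 3\varepsilon$ for all $r,s\ge\bar{\jmath}$, so $(g_{p_j})$ is $D_G$-Cauchy.

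This proves $G$ is totally bounded with respect to $D_G$. Finally, from the estimate $\Delta_G\le\beta D_G$ derived earlier in the section, a finite $\varepsilon$-net for $D_G$ is a finite $\beta\varepsilon$-net for $\Delta_G$, so $G$ is also totally bounded with respect to $\Delta_G$. The main subtlety I expect is keeping the diagonal extraction honest (choosing $\bar{\jmath}$ that is uniform in $k$, which uses finiteness of $\varPhi_\varepsilon$) and recognizing that the right action of $G$ preserves $\lVert\cdot\rVert_\infty$ — which is what makes the three-term triangle bound collapse to a $3\varepsilon$ estimate independent of $\varphi$.
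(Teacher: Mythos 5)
Your proof is correct and follows essentially the same route as the paper: a finite $\varepsilon$-net of $\varPhi$, successive extraction of subsequences so that $(\varphi_k g_{p_j})_j$ is $\lVert\cdot\rVert_\infty$-Cauchy for every net element, the $3\varepsilon$ triangle bound using that right composition by elements of $\mathrm{Aut}_\varPhi(X)$ preserves $\lVert\cdot\rVert_\infty$, and the transfer to $\Delta_G$ via $\Delta_G\le\beta D_G$. The only cosmetic differences are that you invoke sequential compactness of $\varPhi$ where the paper uses total boundedness together with Lemma \ref{TOTBOUNDLEM}, and your mention of diagonalization is unnecessary since the net is finite.
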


\begin{proof}
Let $(g_i)_{i\in \mathbb{N}}$ and $\varepsilon$ be a sequence in $G$ and a positive real number, respectively. Given that $\Phi$ is totally bounded, we can find a finite subset $\Phi_{\varepsilon} = \{ \varphi_1, \dots , \varphi_n \}$ such that if $\varphi\in\Phi$ then there exists $\varphi_{\bar k} \in \Phi_\varepsilon$ for which ${\lVert\varphi_{\bar k}-\varphi\rVert_{\infty}} < \varepsilon$.
	
Let us consider the sequence $(\varphi_1g_i)_{i\in \mathbb{N}}$ in $\Phi$. Since $\Phi$ is totally bounded,
it follows that we can extract a Cauchy subsequence $(\varphi_1g_{i_h})_{h\in \mathbb{N}}$. Then we consider the sequence $\left(\varphi_2g_{i_h}\right)_{h\in \mathbb{N}}$. Again, we can extract a Cauchy subsequence $\left(\varphi_2g_{i_{h_t}}\right)_{t\in \mathbb{N}}$. We can repeat the same argument for any $\varphi_k \in \Phi_{\varepsilon}$. Thus, we are able to extract a subsequence $(g_{p_j})_{j\in \mathbb{N}}$ of $(g_i)_{i\in \mathbb{N}}$ such that $(\varphi_k  g_{p_j})_{j\in \mathbb{N}}$ is a Cauchy sequence for any $k \in \{1, \dots , n\}$. For the finiteness of set $\Phi_\varepsilon$, we can find an index $\bar{\jmath}$ such that
for any $k\in \{1,\ldots,n\}$
	\begin{equation}
		{\lVert\varphi_k  g_{p_r}-\varphi_k g_{p_s}\rVert_{\infty}}< \varepsilon, \ \textnormal{for every} \ s,r \geq \bar{\jmath}.
	\end{equation}
We observe that $\bar{\jmath}$ does not depend on $\varphi$, but only on $\varepsilon$ and $\Phi_{\varepsilon}$.
	
In order to prove that $(g_{p_{j}})_{j\in \mathbb{N}}$ is a Cauchy sequence, we observe that for any $r,s \in \mathbb{N}$ we have
	{\begin{align}
			\lVert\varphi g_{p_r}-\varphi  g_{p_s}\rVert_{\infty}
		& \leq \lVert\varphi  g_{p_r}-\varphi_{\bar k} g_{p_r}\rVert_{\infty} + \lVert\varphi_{\bar k}  g_{p_r}-\varphi_{\bar k}  g_{p_s}\rVert_{\infty} + \lVert\varphi_{\bar k} g_{p_s}-\varphi g_{p_s}\rVert_{\infty}\nonumber \\
		& =  \lVert\varphi-\varphi_{\bar k}\rVert_{\infty} + \lVert\varphi_{\bar k} g_{p_r}-\varphi_{\bar k}  g_{p_s}\rVert_{\infty} + \lVert\varphi_{\bar k}-\varphi\rVert_{\infty}.
	\end{align}}
	
Since ${\lVert\varphi_{\bar k}-\varphi\rVert_{\infty}} < \varepsilon$, we get ${\lVert\varphi g_{p_r}-\varphi  g_{p_s}\rVert_{\infty}} < 3\varepsilon$ for every $r,s \geq \bar{\jmath}$.
Thus, {\color{black} the inequality $D_G(g_{p_r},g_{p_s})\le 3\varepsilon$ holds.} Hence, the sequence $(g_{p_j})_{j\in \mathbb{N}}$ is a Cauchy sequence with respect to $D_G$. Therefore, by Lemma \ref{TOTBOUNDLEM}, $G$ is totally bounded with respect to $D_G$.
\textcolor{black}{As we have previously seen, $\Delta_G\le \beta D_G$,
where $\beta$ is the coefficient defined in point (\ref{coeffalpha}) of Subsection \ref{frame}.
Hence, $G$ is also totally bounded with respect to $\Delta_G$.}
\end{proof}

Therefore, the following statement holds, by recalling that in pseudo-metric spaces a set is compact if and only if it is complete and totally bounded \cite{Ga64}.

\begin{corollary}
	If $G$ is complete, then $G$ is compact.
\end{corollary}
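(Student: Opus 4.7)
The plan is to follow exactly the same template as the earlier corollary about $X$. Since the pseudo-metric space convention in force at this point of the section is that $G$ carries the pseudo-metric $\Delta_G$, completeness in the statement is to be read with respect to $\Delta_G$. Proposition \ref{thmGcompact} has just established that $(G,\Delta_G)$ is totally bounded. Combining this with the assumed completeness, the standard characterization recalled just above the statement, namely that a pseudo-metric space is compact if and only if it is both complete and totally bounded, yields compactness of $G$ immediately.

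So the proof is a one-line invocation: by Proposition \ref{thmGcompact}, $G$ is totally bounded with respect to $\Delta_G$; by hypothesis, $G$ is complete; hence $G$ is compact. The same syllogism works verbatim if one prefers to interpret the statement with respect to $D_G$, using the $D_G$-total-boundedness half of Proposition \ref{thmGcompact} (this is essentially the content of Theorem~3 in \cite{BFGQ19} referred to earlier in the excerpt).

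There is no real obstacle here; all the substantive work was carried out in Proposition \ref{thmGcompact}, where total boundedness was reduced via Lemma \ref{TOTBOUNDLEM} to the extraction of a Cauchy subsequence from an arbitrary sequence in $G$, exploiting the total boundedness of $\varPhi$ and a diagonal argument over a finite $\varepsilon$-net of $\varPhi$. The present corollary is just the packaging of that result once the missing hypothesis of completeness is added.
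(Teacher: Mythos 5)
Your proposal is correct and matches the paper's own argument: the paper likewise deduces the corollary immediately from Proposition \ref{thmGcompact} together with the fact that a pseudo-metric space is compact if and only if it is complete and totally bounded. No further comment is needed.
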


\subsection{Group equivariant operators.}\label{geosection}

Let $V_1\subseteq \mathbb{R}^{X_1}_b$ and $V_2\subseteq \mathbb{R}^{X_2}_b$ be \textcolor{black}{a finite dimensional probability inner product $G_1$-space and
a finite dimensional probability inner product $G_2$-space, respectively.}
Let $\langle \cdot , \cdot \rangle_{V_i}$, $\| \cdot\|_{V_i}$, $\nu_{V_i}$, $f_{V_i}$
be the inner product, the norm, the Borel measure and the probability density function considered on $V_i$,
for $i=1,2$ (all of them are $G_i$-invariant).{  Also, for $i=1,2$, we consider the probability Borel measure $\mu_{V_i}$ by setting  $\mu_{V_i}(A_i)=\int_{A_i}f_{V_i}(\phi)\;d\nu_{V_i}$ for any Borel  set $A_i\subseteq V_i$. We assume that the support of $\mu_{V_i}$ is compact, and  we denote it by $\Phi_i$, for $i=1,2$.}
Moreover, we recall that since $\lVert\cdot\rVert_{V_i}$ and $\lVert\cdot\rVert_{\infty}$ are equivalent in $V_i$, there exist two real numbers $\beta_i,\alpha_i>0$ such that
\begin{equation}
\label{equivmetV}
\alpha_i\lVert\cdot\rVert_{\infty}\le\lVert\cdot\rVert_{V_i}\le\beta_i\lVert\cdot\rVert_{\infty}
\end{equation}
for $i=1,2$.

Let us consider the Bochner space $L^2({\Phi_1},V_2)$ of all
square-integrable maps from ${\Phi_1}$ to $V_2$.
Explicitly:{\begin{equation}\label{eq:norm}
L^2({\Phi_1},V_2):=\left\{F \colon {\Phi_1} \to V_2\bigg| \int_{\Phi_1}\|F(\varphi)\|_{V_2}^2 f_{V_1}(\varphi) \;d\nu_{V_1} \mathrm{\ exists\ and\ is\ finite}\right\}.
\end{equation}
}
{\color{black} In particular, $L^2({\Phi_1},V_2)$ is a vector space.}
We define an inner product on it as follows:
\begin{equation}\label{eq:inner_product}
\langle F_1,F_2\rangle:=\int_{{\Phi_1}}\langle F_1(\varphi),F_2(\varphi)\rangle_{V_2}
f_{V_1}(\varphi)\,d\nu_{V_1}\quad\forall\,F_1,F_2\in L^2({\Phi_1},V_2).
\end{equation}

Let us select a homomorphism $T\colon {G_1} \to {G_2}$.
A \emph{group equivariant operator} (or simply a GEO) from $({\Phi_1},{G_1})$ to
$({\Phi_2},{G_2})$ is {\color{black} a
square-integrable map} $F \colon ({\Phi_1},\lVert \cdot \rVert_{V_1}) \to ({\Phi_2},\lVert \cdot \rVert_{V_2})$ satisfying the property $F(\varphi g) = F(\varphi)T(g)$ for any $\varphi$ in ${\Phi_1}$ and $g$ in ${G_1}$.
\begin{remark}
Since $\lVert \cdot \rVert_{V_1}$ and $\lVert \cdot \rVert_\infty$ are equivalent in $V_1$ and, $\lVert \cdot \rVert_{V_2}$ and $\lVert \cdot \rVert_\infty$ are equivalent in $V_2$, a GEO $F\colon ({\Phi_1},\lVert \cdot \rVert_{V_1}) \to ({\Phi_2},\lVert \cdot \rVert_{V_2})$ is
{\color{black} Borel measurable}
also with respect to the $L^\infty$-norm defined on ${\Phi_1}$ and ${\Phi_2}$.
\end{remark}
We define the following norms on the space of GEOs {from $({\Phi_1},{G_1})$ to
$({\Phi_2},{G_2})$}:

\begin{enumerate}
  \item {\color{black} $\vertiii{F}_\infty:= \sup_{\varphi \in {\Phi_1}}\lVert F(\varphi) \rVert_\infty$}
  \item {\color{black} $\vertiii{F}_{V_2}:= \sup_{\varphi \in {\Phi_1}}\lVert F(\varphi) \rVert_{V_2}$}
  \item $\vertiii{F}_{L^2}:= \left(\int_{\Phi_1}\lVert F(\varphi) \rVert_{V_2}^2 f_{V_1}(\varphi)\ d\nu_{V_1} \right)^{\frac{1}{2}}$.
\end{enumerate}

\textcolor{black}{Since $V_2$ is a finite dimensional real vector space, to show that $L^2(\Phi_1,V_2)$ is a Banach space, it is enough to show that $L^2(\Phi_1,\mathbb{R})$ is a Banach space. Note that the latter is a  well-known fact (for instance, see \cite[Chapter 4]{bogachev2007measure}). Clearly, $\vertiii{\cdot}_{L^2}$ is the norm corresponding to the inner product defined in Expression (\ref{eq:inner_product}). Hence,
 $L^2({\Phi_1},V_2)$ is \textcolor{black}{also} a Hilbert space.}
\begin{lemma}
	\label{topeq}
	$\vertiii{\cdot}_\infty$ and $\vertiii{\cdot}_{V_2}$ induce the same topology on the space of GEOs from $({\Phi_1},G_1)$ to $({\Phi_2},G_2)$.
\end{lemma}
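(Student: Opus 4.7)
The plan is to transfer the finite-dimensional norm equivalence $\alpha_2\lVert\cdot\rVert_\infty \le \lVert\cdot\rVert_{V_2} \le \beta_2\lVert\cdot\rVert_\infty$ on $V_2$ (inequality (\ref{equivmetV})) directly up to the space of GEOs. Concretely, for any GEO $F$ and any $\varphi\in\varPhi_1$, applying the pointwise bound to the vector $F(\varphi)\in V_2$ gives
\begin{equation*}
\alpha_2\,\lVert F(\varphi)\rVert_\infty \;\le\; \lVert F(\varphi)\rVert_{V_2} \;\le\; \beta_2\,\lVert F(\varphi)\rVert_\infty.
\end{equation*}

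Before taking the maximum over $\varphi$, I would verify that the two maxima in the definitions of $\vertiii{F}_\infty$ and $\vertiii{F}_{V_2}$ are actually attained. By assumption $F$ is continuous as a map $(\varPhi_1,\lVert\cdot\rVert_{V_1})\to(\varPhi_2,\lVert\cdot\rVert_{V_2})$, and by the equivalence of $\lVert\cdot\rVert_{V_i}$ and $\lVert\cdot\rVert_\infty$ on $V_i$ (already exploited in the remark preceding the lemma) it is continuous also with respect to the $L^\infty$-norms; composing with the continuous norms $\lVert\cdot\rVert_\infty$ and $\lVert\cdot\rVert_{V_2}$ then shows that $\varphi\mapsto\lVert F(\varphi)\rVert_\infty$ and $\varphi\mapsto\lVert F(\varphi)\rVert_{V_2}$ are continuous real-valued functions on the compact space $\varPhi_1$. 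Hence both maxima are attained, and taking the max over $\varphi\in\varPhi_1$ of the pointwise inequality above yields
\begin{equation*}
\alpha_2\,\vertiii{F}_\infty \;\le\; \vertiii{F}_{V_2} \;\le\; \beta_2\,\vertiii{F}_\infty.
\end{equation*}

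From this two-sided bound the topological equivalence follows at once. Writing $d_\infty(F_1,F_2) := \vertiii{F_1-F_2}_\infty$ and $d_{V_2}(F_1,F_2) := \vertiii{F_1-F_2}_{V_2}$ (both well-defined, since $F_1-F_2$ still takes values in the vector space $V_2$ and the above continuity/compactness argument applies to $F_1-F_2$ in place of $F$), the same inequalities give $\alpha_2\,d_\infty \le d_{V_2} \le \beta_2\,d_\infty$. Therefore every $d_{V_2}$-ball of radius $r$ around a GEO $F_0$ contains the $d_\infty$-ball around $F_0$ of radius $r/\beta_2$, and every $d_\infty$-ball of radius $r$ contains the $d_{V_2}$-ball of radius $\alpha_2 r$; the two systems of balls generate the same topology.

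I do not expect any real obstacle: the entire content is the finite-dimensionality of $V_2$ (to get equivalence of $\lVert\cdot\rVert_{V_2}$ and $\lVert\cdot\rVert_\infty$ with uniform constants $\alpha_2,\beta_2$) together with the compactness of $\varPhi_1$ (to guarantee the maxima are attained). The only point worth double-checking is the well-posedness of the two norms on GEOs, which is handled by the continuity argument above.
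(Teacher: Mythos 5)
Your proof is correct and follows essentially the same route as the paper: apply the pointwise equivalence $\alpha_2\lVert\cdot\rVert_\infty\le\lVert\cdot\rVert_{V_2}\le\beta_2\lVert\cdot\rVert_\infty$ on $V_2$ to $F(\varphi)$ and take the maximum over $\varphi\in\varPhi_1$, concluding that the two norms are equivalent and hence induce the same topology. Your additional checks (attainment of the maxima via continuity of $F$ and compactness of $\varPhi_1$, and the remark that differences $F_1-F_2$ still land in $V_2$) are sound refinements of details the paper leaves implicit.
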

\begin{proof}
\textcolor{black}{From (\ref{equivmetV}),}
we already know that for any $\varphi$ in ${\Phi_1}$
$$\alpha_2\lVert F(\varphi)\rVert_{\infty}\le\lVert F(\varphi)\rVert_{V_2}\le\beta_2\lVert F(\varphi) \rVert_{\infty}$$
\textcolor{black}{where $\alpha_2,\beta_2$ are two fixed positive coefficients.}
Hence, we have that:
{\color{black} $$\alpha_2\sup_{\varphi \in {\Phi_1}}\lVert F(\varphi)\rVert_{\infty}\le \sup_{\varphi \in {\Phi_1}} \lVert F(\varphi)\rVert_{V_2}\le\beta_2 \sup_{\varphi \in {\Phi_1}}\lVert F(\varphi) \rVert_{\infty}.$$}
Therefore, $\vertiii{\cdot}_\infty$ and $\vertiii{\cdot}_{V_2}$ are equivalent norms, and hence they induce the same topology on the space of GEOs from $({\Phi_1},G_1)$ to $({\Phi_2},G_2)$.
\end{proof}
\begin{lemma}
	\label{L2finerV2}
	The topology induced by $\vertiii{\cdot}_{V_2}$ on the space of GEOs from $({\Phi_1},G_1)$ to $({\Phi_2},G_2)$ is finer than the one induced by $\vertiii{\cdot}_{L^2}$.
\end{lemma}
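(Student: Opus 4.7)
The plan is to establish the pointwise norm estimate inside the integral defining $\vertiii{\cdot}_{L^2}$ and then exploit the fact that $\mu_{V_1}$ is a probability measure on $\varPhi_1$. Concretely, I would compare the two pseudo-metrics induced by the norms via differences $F_1 - F_2$ of GEOs, which are bounded continuous (hence Borel measurable) maps $\varPhi_1 \to V_2$; thus both $\vertiii{F_1-F_2}_{V_2}$ and $\vertiii{F_1-F_2}_{L^2}$ are well defined as expressions on such differences.

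For every $\varphi \in \varPhi_1$, one has the trivial pointwise bound $\lVert (F_1-F_2)(\varphi) \rVert_{V_2} \le \max_{\psi \in \varPhi_1} \lVert (F_1-F_2)(\psi) \rVert_{V_2} = \vertiii{F_1-F_2}_{V_2}$. Squaring, multiplying by the nonnegative density $f_{V_1}$ and integrating over $\varPhi_1$, the constant pulls out and one is left with $\int_{\varPhi_1} f_{V_1} \, d\lambda_{V_1} = \mu_{V_1}(\varPhi_1) = 1$. This yields the key inequality
\[
\vertiii{F_1-F_2}_{L^2} \;\le\; \vertiii{F_1-F_2}_{V_2}.
\]

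From this inequality, every open ball for $\vertiii{\cdot}_{V_2}$ of radius $r$ centered at a GEO $F$ is contained in the open ball for $\vertiii{\cdot}_{L^2}$ of the same radius centered at $F$, which immediately gives that the $V_2$-topology is finer than the $L^2$-topology. I do not expect any real obstacle here: the argument is essentially the finite-measure comparison $L^2 \subseteq L^\infty$ in the special case of a probability measure, applied to the scalar function $\varphi \mapsto \lVert (F_1-F_2)(\varphi) \rVert_{V_2}$. The only mildly subtle point is to remark that, although the space of GEOs is not a vector space, the "norms" above induce bona fide pseudo-metrics through the formula $d(F_1,F_2) := \vertiii{F_1-F_2}_\star$, and it is this pseudo-metric comparison that yields the topological inclusion.
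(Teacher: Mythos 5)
Your proof is correct and follows essentially the same route as the paper: the pointwise bound $\lVert F(\varphi)\rVert_{V_2}\le \vertiii{F}_{V_2}$, squaring and integrating against the density $f_{V_1}$ (which integrates to $1$), giving $\vertiii{\cdot}_{L^2}\le \vertiii{\cdot}_{V_2}$ and hence the inclusion of balls. Your extra remark that the comparison should really be made on differences $F_1-F_2$, viewed through the induced pseudo-metrics since the space of GEOs is not a vector space, is a small but worthwhile precision that the paper's proof leaves implicit.
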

\begin{proof}
Let us consider a GEO $F$ from $({\Phi_1},G_1)$ to $({\Phi_2},G_2)$. We have that
\begin{align*}
 \vertiii{F}_{L^2}
 &=\left(\int_{\Phi_1}\lVert F(\varphi) \rVert_{V_2}^2 f_{V_1}(\varphi)\ d\nu_{V_1}  \right)^{\frac{1}{2}}\\
 &\le{\color{black} \left(\int_{\Phi_1}\left(\sup_{\varphi \in {\Phi_1}}\lVert F(\varphi) \rVert_{V_2}^2\right) f_{V_1}(\varphi)\ d\nu_{V_1}  \right)^{\frac{1}{2}}}\\
 &=\vertiii{F}_{V_2}\left(\int_{\Phi_1} f_{V_1}(\varphi)\ d\nu_{V_1}  \right)^{\frac{1}{2}}\\
 &= \vertiii{F}_{V_2}.
 \end{align*}
	The above inequality directly implies the statement of the lemma.
\end{proof}

We can now define the concept of GENEO.

\begin{definition}\label{geneo}
A \emph{group equivariant non-expansive operator} (in brief, GENEO) {from $(({\Phi_1}, \lVert \cdot \rVert_{V_1}),G_1)$ to $(({\Phi_2}, \lVert \cdot \rVert_{V_2}),G_2)$} is a GEO $F$
from $({\Phi_1},G_1)$ to $({\Phi_2},G_2)$ such that
$$\lVert F(\varphi) - F(\varphi') \rVert_{V_2} \le \lVert \varphi - \varphi' \rVert_{V_1}$$
for every $\varphi, \varphi'\in\Phi_1$.
\end{definition}

{\begin{remark}
In \cite{BFGQ19}, another definition of a GENEO was given. Indeed, in the aforementioned paper, a GENEO is a map between two compact spaces of functions $\varPsi_1$ and $\varPsi_2$, which is group equivariant  with respect to a homomorphism $T: G_1\to G_2$ and non-expansive with respect to the $L^{\infty}$-norm. In the current framework, we call the aforementioned operators $L^{\infty}$-GENEOs. One could equip the space of $L^{\infty}$-GENEOs with the $L^{\infty}$-norm.
 It was shown in \cite{BFGQ19} that the space of all $L^{\infty}$-GENEOs from $({\varPsi_1},G_1)$ to $({\varPsi_2},G_2)$ is compact.
\end{remark}}

A key property of the space of GENEOs {in our framework}  is stated by the following theorem.

\begin{theorem}\label{teoremadicompattezza}
	The space of GENEOs {from $(({\Phi_1}, \lVert \cdot \rVert_{V_1}),G_1)$ to $(({\Phi_2}, \lVert \cdot \rVert_{V_2}),G_2)$} is compact with respect to the norm $\vertiii{\cdot}_{L^2}$.
\end{theorem}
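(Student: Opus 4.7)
The plan is to bootstrap from a stronger topology to the $L^2$-topology. Lemma \ref{L2finerV2} already tells us that the $\vertiii{\cdot}_{V_2}$-topology on the space of GEOs is finer than the $\vertiii{\cdot}_{L^2}$-topology. So if I can prove compactness of the space of GENEOs with respect to $\vertiii{\cdot}_{V_2}$, then continuity of the identity map pushes compactness down to the $\vertiii{\cdot}_{L^2}$-topology, which is what we want.

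To prove compactness with respect to $\vertiii{\cdot}_{V_2}$, the natural tool is the Arzel\`a--Ascoli theorem. The space of GENEOs consists of continuous maps from the compact metric space $(\varPhi_1,\lVert\cdot\rVert_{V_1})$ into the compact metric space $(\varPhi_2,\lVert\cdot\rVert_{V_2})$. The defining non-expansiveness condition $\lVert F(\varphi)-F(\varphi')\rVert_{V_2}\le \lVert \varphi-\varphi'\rVert_{V_1}$ gives uniform 1-Lipschitzness, hence uniform equicontinuity of the whole family. Pointwise (even uniform) boundedness is automatic since all values lie in the compact set $\varPhi_2\subset V_2$. Arzel\`a--Ascoli then yields relative compactness in $C(\varPhi_1,\varPhi_2)$ endowed with the uniform metric $\vertiii{F_1-F_2}_{V_2}=\max_{\varphi\in\varPhi_1}\lVert F_1(\varphi)-F_2(\varphi)\rVert_{V_2}$.

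Next I would check that the space of GENEOs is closed in this uniform topology, which gives compactness outright. So I would take a $\vertiii{\cdot}_{V_2}$-convergent sequence $F_n\to F$ of GENEOs and verify the four defining properties of $F$: (i) $F$ maps into $\varPhi_2$ since $\varPhi_2$ is closed in $V_2$; (ii) $F$ is continuous as a uniform limit of continuous maps; (iii) $G_1$-equivariance passes to the limit since $F_n(\varphi g)=F_n(\varphi)T(g)$ for every $n$ and both sides converge; (iv) non-expansiveness passes to the limit because the inequality is preserved by pointwise limits.

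Having established that the space of GENEOs is $\vertiii{\cdot}_{V_2}$-compact, the last step is the transfer to $\vertiii{\cdot}_{L^2}$: Lemma \ref{L2finerV2} shows that the identity map is continuous from the $\vertiii{\cdot}_{V_2}$-topology to the $\vertiii{\cdot}_{L^2}$-topology, so the continuous image of a compact space is compact. I do not expect a genuinely hard step here; the main thing to handle carefully is simply making sure the Arzel\`a--Ascoli hypotheses are stated with respect to the $V_1$ and $V_2$ norms (as opposed to the $L^\infty$ norms), which is immediate because these norms are equivalent via the constants $\alpha_i,\beta_i$ of \eqref{equivmetV} and in particular induce the same uniform structure on $\varPhi_1$ and $\varPhi_2$.
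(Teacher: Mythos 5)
Your proposal is correct, but it takes a genuinely different route from the paper. You prove compactness of the space of GENEOs with respect to $\vertiii{\cdot}_{V_2}$ directly: Arzel\`a--Ascoli applies because GENEOs are uniformly $1$-Lipschitz maps from the compact space $(\varPhi_1,\lVert\cdot\rVert_{V_1})$ into the compact space $(\varPhi_2,\lVert\cdot\rVert_{V_2})$, and you then check that the GENEO properties are closed under uniform limits. The paper instead reduces to the previously known compactness of the space of $L^{\infty}$-GENEOs (Theorem 7 in \cite{BFGQ19}): it rescales each operator to $\frac{\alpha_2}{\beta_1}F_i$, which is non-expansive for the $L^{\infty}$-norms and maps into $\varPhi_2'=\frac{\alpha_2}{\beta_1}\varPhi_2$, extracts a $\vertiii{\cdot}_{\infty}$-convergent subsequence there, and then verifies that the rescaled limit is non-expansive for the $V$-norms; equivariance comes for free from the cited theorem. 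Both arguments finish identically, transferring compactness from the uniform topology to $\vertiii{\cdot}_{L^2}$ via Lemma \ref{L2finerV2} (the paper also passes through Lemma \ref{topeq} since its uniform convergence is first obtained in $\vertiii{\cdot}_{\infty}$). What your route buys is a self-contained proof that avoids the rescaling bookkeeping with the constants $\alpha_2,\beta_1$; what the paper's route buys is reuse of existing machinery, at the price of that rescaling. One small point you should make explicit in step (iii): to pass equivariance to the limit you need that right composition with $T(g)$ is continuous on $\varPhi_2$, which holds because it is an isometry for $\lVert\cdot\rVert_{V_2}$ by the $G_2$-invariance of the inner product (or for $\lVert\cdot\rVert_{\infty}$, and then by norm equivalence); this is easy but is the one place where a hypothesis beyond non-expansiveness and compactness is actually used.
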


\begin{proof}
{	Let  $(F_i)$ be a sequence of GENEOs from $(({\Phi_1}, \lVert \cdot \rVert_{V_1}),G_1)$ to $(({\Phi_2}, \lVert \cdot \rVert_{V_2}),G_2)$. It will suffice to prove that there exists a subsequence $(F_{i_j})$ of $(F_i)$ that converges in the $\vertiii{\cdot}_{L^2}$-topology.
%
From (\ref{equivmetV}) it follows that for every index $i$ and every $\varphi,\varphi'\in\Phi_1$
	\begin{equation}\label{eq1forreferee}
	    \alpha_2\lVert F_i(\varphi) - F_i(\varphi') \rVert_\infty \le \lVert F_i(\varphi) - F_i(\varphi') \rVert_{V_2} \le \lVert \varphi - \varphi' \rVert_{V_1} \le \beta_1 \lVert \varphi - \varphi' \rVert_\infty
	\end{equation}
	and hence
	$$ \lVert F_i(\varphi) - F_i(\varphi') \rVert_\infty \le \frac{\beta_1}{\alpha_2} \lVert \varphi - \varphi' \rVert_\infty.$$
	Therefore, $\left\| \frac{\alpha_2}{\beta_1}F_i(\varphi) - \frac{\alpha_2}{\beta_1}F_i(\varphi') \right\|_\infty \le  \lVert \varphi - \varphi' \rVert_\infty$}, {and each map
$\frac{\alpha_2}{\beta_1}F_i$ is an { $L^{\infty}$-GENEO} from $({\Phi_1},G_1)$ to $({\Phi_2'},G_2)$, where $\Phi_2'=\frac{\alpha_2}{\beta_1}\Phi_2$.
Since the space of { $L^{\infty}$-GENEO}s from $({\Phi_1},G_1)$ to $({\Phi_2'}, G_2)$ is compact with respect to $\vertiii{\cdot}_{\infty}$ (see Theorem 7 in \cite{BFGQ19}), we can consider the sequence $\left(\frac{\alpha_2}{\beta_1}F_i\right)$ and extract a subsequence $\left(\frac{\alpha_2}{\beta_1}F_{i_j}\right)$ that converges with respect to $\vertiii{\cdot}_{\infty}$ to an { $L^{\infty}$-GENEO} $\bar{F}$ from $({\Phi_1},G_1)$ to $({\Phi'_2},G_2)$.
The maps $\frac{\beta_1}{\alpha_2} \bar{F}$ and $(F_{i_j})$ are GEOs from $({\Phi_1},{G_1})$ to
$({\Phi_2},{G_2})$, and we observe that $(F_{i_j})$ converges to $\frac{\beta_1}{\alpha_2} \bar{F}$ with respect to $\vertiii{\cdot}_{\infty}$.}
{From Lemma \ref{topeq}, it follows that $(F_{i_j})$ converges to $\frac{\beta_1}{\alpha_2}\bar{F}$ \textcolor{black}{also} with respect to $\vertiii{\cdot}_{V_2}$.

Since $\lim_{j\to\infty}\sup_{\varphi \in {\Phi_1}}\left\| \frac{\beta_1}{\alpha_2} \bar{F}(\varphi)-F_{i_j}(\varphi) \right\|_{V_2}=0$, then $\frac{\beta_1}{\alpha_2} \bar{F}(\varphi)= \lim_{j \to \infty} F_{i_j}(\varphi)$ with respect to the norm $\lVert \cdot \rVert_{ V_2}$ (or, equivalently, the norm $\lVert \cdot \rVert_{\infty}$) for every $\varphi\in\Phi_1$.
Therefore, for any pair $\varphi_1,\varphi_2$ in ${\Phi_1}$
	\begin{align}
	\left\| \frac{\beta_1}{\alpha_2}\bar F(\varphi_1)-\frac{\beta_1}{\alpha_2}\bar F(\varphi_2)\right\|_{V_2} & = \left\lVert\lim_{j \to \infty}F_{i_j}(\varphi_1)-\lim_{j \to \infty}F_{i_j}(\varphi_2)\right\rVert_{V_2} \nonumber\\ \
	& = \lim_{j \to \infty}\lVert F_{i_j}(\varphi_1)-F_{i_j}(\varphi_2)\rVert_{V_2} \nonumber\\
	& \le \lim_{j \to \infty}\lVert \varphi_1-\varphi_2\rVert_{V_1} \label{eq2forreferee}\\
	& =\lVert \varphi_1-\varphi_2\rVert_{V_1}\nonumber.
	\end{align}
%
%
%
%
This proves that $\frac{\beta_1}{\alpha_2} \bar{F}$ is a GENEO from $({\Phi_1}, \lVert \cdot \rVert_{V_1})$ to $({\Phi_2}, \lVert \cdot \rVert_{V_2})$. Since $(F_{i_j})$ converges to $\frac{\beta_1}{\alpha_2}\bar{F}$ with respect to $\vertiii{\cdot}_{V_2}$, Lemma \ref{L2finerV2} guarantees that $(F_{i_j})$ converges to $\frac{\beta_1}{\alpha_2}\bar{F}$ \textcolor{black}{also} with respect to $\vertiii{\cdot}_{L^2}$.}
\end{proof}

{The compactness of the space of GENEOs with respect to the topology induced by the norm $\vertiii{\cdot}_{L^2}$ guarantees that such a space can be approximated by a finite set, as stated by the following result.

\begin{corollary}
	\label{thmapprox}
	For every $\varepsilon>0$, the space $\mathcal{F}^{\mathrm{all}}$ of all GENEOs from $(({\Phi_1}, \lVert \cdot \rVert_{V_1}),G_1)$ to $(({\Phi_2}, \lVert \cdot \rVert_{V_2}),G_2)$ admits a finite subset $\mathcal{F}$ such that for every $F\in\mathcal{F}^{\mathrm{all}}$
there exists an $F'\in \mathcal{F}$ such that $\vertiii{F-F'}_{L^2}<\varepsilon$.
\end{corollary}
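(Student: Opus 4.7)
The plan is to deduce the statement directly from Theorem \ref{teoremadicompattezza}, which already provides compactness of $\mathcal{F}^{\mathrm{all}}$ with respect to the norm $\vertiii{\cdot}_{L^2}$. Since the $\vertiii{\cdot}_{L^2}$-norm endows $\mathcal{F}^{\mathrm{all}}$ with the structure of a (pseudo-)metric space, the only missing ingredient is the standard fact that a compact (pseudo-)metric space is totally bounded, i.e., admits a finite $\varepsilon$-net for every $\varepsilon>0$.

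Concretely, I would fix $\varepsilon>0$ and consider the open cover
\[
\mathcal{F}^{\mathrm{all}}=\bigcup_{F\in\mathcal{F}^{\mathrm{all}}} B_{L^2}(F,\varepsilon),
\]
where $B_{L^2}(F,\varepsilon)=\{F'\in\mathcal{F}^{\mathrm{all}}:\vertiii{F-F'}_{L^2}<\varepsilon\}$. By Theorem \ref{teoremadicompattezza} this cover admits a finite subcover, say indexed by operators $F_1,\dots,F_N\in\mathcal{F}^{\mathrm{all}}$. Setting $\mathcal{F}:=\{F_1,\dots,F_N\}$, every $F\in\mathcal{F}^{\mathrm{all}}$ lies in some $B_{L^2}(F_k,\varepsilon)$, so there exists $F'=F_k\in\mathcal{F}$ with $\vertiii{F-F'}_{L^2}<\varepsilon$, which is exactly the desired conclusion.

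There is no real obstacle here: the corollary is essentially a rephrasing of total boundedness, which is an immediate consequence of compactness. The only minor subtlety is that $\vertiii{\cdot}_{L^2}$ is a priori a seminorm on the space of GEOs (two GENEOs agreeing $\mu_{V_1}$-almost everywhere on $\varPhi_1$ have vanishing $\vertiii{\cdot}_{L^2}$-distance), but this does not affect the argument: the open-cover / finite-subcover extraction works equally well in a pseudo-metric setting, and the statement is formulated in terms of the distance $\vertiii{F-F'}_{L^2}$ itself rather than requiring equality of representatives.
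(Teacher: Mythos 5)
Your argument is correct and is essentially the paper's own proof: the authors also cover $\mathcal{F}^{\mathrm{all}}$ by the open $\vertiii{\cdot}_{L^2}$-balls of radius $\varepsilon$ centered at its points and extract a finite subcover via Theorem \ref{teoremadicompattezza}, taking the centers as $\mathcal{F}$. Your remark that the pseudo-metric nature of $\vertiii{\cdot}_{L^2}$ does not affect the extraction is a sensible clarification, but the route is the same.
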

\begin{proof}
	It immediately follows from Theorem \ref{teoremadicompattezza}, by considering a finite subcover of the open cover of $\mathcal{F}^{\mathrm{all}}$ whose elements are the balls of radius $\varepsilon$ centered at points of $\mathcal{F}^{\mathrm{all}}$.
\end{proof}

\begin{remark}
In the proof of Theorem \ref{teoremadicompattezza}, we used the ``non-expansive" assumption in the expressions (\ref{eq1forreferee}) and (\ref{eq2forreferee}).  Note that Theorem  \ref{teoremadicompattezza} no longer holds if we replace ``GENEOs" with ``GEOs". For instance, assume that
\begin{itemize}
    \item $X=[0,1]$, and $G=\{\id_X\}$;
    \item {\color{black} $V$ is the (1-dimensional) vector space of all real constant functions $\varphi_a:[0,1]\to\mathbb{R}$, with $\varphi_a(x)=a$ for all $x\in X$, equipped with the inner product $\langle\varphi_a,\varphi_b\rangle_V:=ab$};
    \item  {\color{black} $\Phi=\{\varphi_a:[0,1]\to\mathbb{R}: a\in[0,1]\}$.}
    \end{itemize}
 {\color{black} Here, $L^2(\Phi,V)$ is defined by setting $\Phi_1=\Phi\simeq [0,1]$, $V_1=V_2=V\simeq \R$, $\nu_{V_1}$ equal to the Lebesgue measure on $V\simeq\R$ and $f_{V_1}\equiv 1$ in the equality (\ref{eq:norm}).
    We note that $L^2(\Phi,V)$ and $L^2([0,1],\mathbb{R})$ are isomorphic vector spaces, where $[0,1]$ and $\mathbb{R}$ are both equipped with the absolute value norm. In other words, we can identify $\Phi$ with $[0,1]$ and $V$ with $\R$, by taking each function $\varphi_a$ to $a$. According to Definition 2.19, the space of all GEOs from $(\Phi,G)$ to $(\Phi,G)$
    with respect to $\mathrm{id}_G:G\to G$ is the space $L^2([0,1],[0,1])$ of all the square-integrable functions from $[0,1]$ to $[0,1]$.
    It is well-known that $L^2([0,1],[0,1])$ is not compact.}
 \end{remark}

Another important property of the space $\mathcal{F}^{\mathrm{all}}$ is stated by the next result.

\begin{proposition}
\label{propconvex}
If $\Phi_2$ is convex, then the set $\mathcal{F}^\mathrm{all}$ of all GENEOs from $(({\Phi_1}, \lVert \cdot \rVert_{V_1}),G_1)$ to $(({\Phi_2}, \lVert \cdot \rVert_{V_2}),G_2)$ is convex.
\end{proposition}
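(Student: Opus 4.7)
The plan is to take two arbitrary GENEOs $F_1, F_2 \in \mathcal{F}^{\mathrm{all}}$ and a scalar $t \in [0,1]$, form the convex combination $F := tF_1 + (1-t)F_2$ pointwise in $V_2$, and verify that $F$ satisfies each of the defining properties of a GENEO from $((\varPhi_1, \|\cdot\|_{V_1}), G_1)$ to $((\varPhi_2, \|\cdot\|_{V_2}), G_2)$. The four properties to check are (i) $F(\varphi) \in \varPhi_2$ for every $\varphi \in \varPhi_1$, (ii) equivariance with respect to the homomorphism $T$, (iii) non-expansiveness, and (iv) continuity.

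Property (i) is precisely where the convexity hypothesis on $\varPhi_2$ enters: since $F_1(\varphi), F_2(\varphi) \in \varPhi_2$ and $t \in [0,1]$, the convex combination $tF_1(\varphi) + (1-t)F_2(\varphi)$ lies in $\varPhi_2$. Property (iii) follows immediately from the triangle inequality and the non-expansiveness of $F_1$ and $F_2$:
\begin{align*}
\|F(\varphi) - F(\varphi')\|_{V_2}
&\le t\|F_1(\varphi) - F_1(\varphi')\|_{V_2} + (1-t)\|F_2(\varphi) - F_2(\varphi')\|_{V_2}\\
&\le \|\varphi - \varphi'\|_{V_1}.
\end{align*}
Property (iv) is then free, since non-expansiveness implies continuity with respect to $\|\cdot\|_{V_1}$ and $\|\cdot\|_{V_2}$.

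Property (ii) is the step that deserves the most care, but it is essentially routine. The action of $G_2$ on $V_2$ is composition on the right, namely $\psi \mapsto \psi \circ T(g)$, which is a linear operation on $V_2$ because addition of functions in $V_2$ is defined pointwise and $(a\psi_1 + b\psi_2)\circ T(g) = a(\psi_1 \circ T(g)) + b(\psi_2 \circ T(g))$. Hence for every $\varphi \in \varPhi_1$ and $g \in G_1$,
\begin{align*}
F(\varphi g) &= tF_1(\varphi g) + (1-t)F_2(\varphi g)\\
&= tF_1(\varphi)T(g) + (1-t)F_2(\varphi)T(g) = F(\varphi)T(g),
\end{align*}
using equivariance of $F_1$ and $F_2$.

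The only part that might appear to be an obstacle is ensuring that all the operations above stay inside the prescribed spaces: the pointwise sum $tF_1(\varphi) + (1-t)F_2(\varphi)$ is taken in the ambient vector space $V_2$ where $F_1(\varphi)$ and $F_2(\varphi)$ live, so the expression is well defined, and then convexity of $\varPhi_2$ pulls the result back into $\varPhi_2$. Combining (i)--(iv) yields that $F \in \mathcal{F}^{\mathrm{all}}$, proving convexity.
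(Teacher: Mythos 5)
Your proof is correct and follows essentially the same route as the paper's: form the pointwise convex combination, use convexity of $\varPhi_2$ for well-definedness, linearity of right composition for equivariance, and the triangle inequality for non-expansiveness. Your explicit remarks that continuity follows from non-expansiveness and that the $G_2$-action is linear are details the paper leaves implicit, but they do not change the argument.
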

\begin{proof}

Let us consider $F, F'\in \mathcal{F}^\mathrm{all}$ and a value $t\in [0,1]$. We can define an operator $F_t \colon \Phi_1 \to \Phi_2$ by setting $F_t(\varphi) := (1-t)F(\varphi)+tF'(\varphi)$
for any $\varphi \in \Phi_1$.
Note that the convexity of $\Phi_2$ ensures us that $F_t$ is well defined.
First we prove that $F_t$ is a GEO from $(\Phi_1,G_1)$ to $(\Phi_2,G_2)$. Since $F$ and $F'$ are  equivariant, for every $\varphi\in\Phi_1$ and every $g\in G_1$
	$$F_t(\varphi  g) = (1-t)F(\varphi g)+tF'(\varphi g)=(1-t)F(\varphi)T(g)+tF'(\varphi)T(g)= F_t(\varphi)  T(g).$$	
	Since $F$ and $F'$ are non-expansive, $F_t$ is non-expansive:
	\begin{align*}
	\left\| F_t(\varphi)-F_t(\varphi')\right\|_{V_2} & = \left\|((1-t)F(\varphi)+tF'(\varphi)) - ((1-t)F(\varphi')+tF'(\varphi')) \right\|_{V_2}\\
	& =  \left\|(1-t)(F(\varphi)-F(\varphi')) + t(F'(\varphi)-F'(\varphi'))\right\|_{V_2} \\
	& \le  (1-t)\left\|F(\varphi)-F(\varphi')\right\|_{V_2} + t\left\|F'(\varphi)-F'(\varphi')\right\|_{V_2} \\
& \le  (1-t)\left\|\varphi-\varphi'\right\|_{V_1} + t\left\|\varphi-\varphi'\right\|_{V_1} \\
	& = \left\|\varphi-\varphi'\right\|_{V_1}.
	\end{align*}
	Therefore $F_t$ is a GENEO from $(({\Phi_1}, \lVert \cdot \rVert_{V_1}),G_1)$ to $(({\Phi_2}, \lVert \cdot \rVert_{V_2}),G_2)$.
\end{proof}
}

{\color{black}
\subsection{The action of $G$ on the space $\mathcal{F}^{\mathrm{all}}$ of all GENEOs.}\label{actiongeosection}

Let $\mathcal{F}^{\mathrm{all}}$ be the topological space of all GENEOs from $(({\Phi_1}, \lVert \cdot \rVert_{V_1}),G_1)$ to $(({\Phi_2}, \lVert \cdot \rVert_{V_2}),G_2)$, under the assumptions of the previous Section \ref{geosection}.
We already know that the group $G_i$ acts on ${\Phi_i}$, $X_i$, $G_i$ ($i=1,2$), respectively. For every $\bar g\in G_i$ these actions are indeed defined:
\begin{enumerate}
  \item $\varphi\mapsto \varphi \bar g$ for every $\varphi\in {\Phi_i}$;
  \item $x\mapsto \bar g(x)$ for every $x\in X_i$;
  \item $g\mapsto g\bar g$ and $g\mapsto \bar g g$ for every $g\in G_i$.
\end{enumerate}
Furthermore, we have already seen that these actions are isometries of $(\Phi_i,\Delta_{\Phi_i})$ (Subsect. \ref{frame}), $(X_i,\Delta_{X_i})$ (Prop. \ref{pp1}),
and $(G_i,\Delta_{G_i})$ (Remark \ref{remgaction}), respectively.

Now, we want to show that $G_1$ and $G_2$ also act isometrically on $\mathcal{F}^{\mathrm{all}}$.

For every GENEO $F$ from $(({\Phi_1}, \lVert \cdot \rVert_{V_1}),G_1)$ to $(({\Phi_2}, \lVert \cdot \rVert_{V_2}),G_2)$ with respect to $T:G_1\to G_2$, and every $\bar h\in G_2$, let us consider the map $F^{\bar h}:\Phi_1\to \Phi_2$ defined by setting $F^{\bar h}(\varphi):=F(\varphi)\bar h$, and
the homomorphism $T^{\bar h}:G_1\to G_2$ defined by setting $T^{\bar h}(g):=\bar h^{-1}T(g)\bar h$.

The following statement holds.

\begin{proposition}
\label{GactsonFall1}
For every $\bar h\in G_2$, the map $F^{\bar h}$ is a GENEO {\color{black} from $(({\Phi_1}, \lVert \cdot \rVert_{V_1}),G_1)$ to $(({\Phi_2}, \lVert \cdot \rVert_{V_2}),G_2)$} with respect to the homomorphism $T^{\bar h}$.
\end{proposition}
\begin{proof}
For every $\varphi,\varphi'\in \Phi_1$ and $g\in G_1$
\begin{align*}
	F^{\bar h}(\varphi g) & = F(\varphi g) \bar h\\
	& =  F(\varphi) T(g) \bar h \\
	& =  (F(\varphi) \bar h)  (\bar h^{-1} T(g) \bar h)  \\
    & = F^{\bar h}(\varphi) T^{\bar h}(g)
	\end{align*}
and
\begin{align*}
	\|F^{\bar h}(\varphi) - F^{\bar h}(\varphi')\|_\infty & = \|F(\varphi) \bar h - F(\varphi') \bar h\|_\infty\\
	& =  \|F(\varphi) - F(\varphi')\|_\infty \\
	& \le  \|\varphi - \varphi'\|_\infty.
	\end{align*}
\end{proof}

Proposition \ref{GactsonFall1} allows to define the group actions of $G_1$ and $G_2$ on $\mathcal{F}^{\mathrm{all}}$, respectively.

For any $\bar g\in G_1$ we can indeed consider the action that takes each $F\in \mathcal{F}^{\mathrm{all}}$ to the GENEO $F^{T(\bar g)}$, while
for any $\bar h\in G_2$ we can consider the action that takes each $F\in \mathcal{F}^{\mathrm{all}}$ to the GENEO $F^{\bar h}$.
The interesting point is that both these actions are isometries, as stated by the following proposition concerning the restriction
$\langle \cdot,\cdot\rangle_{\vert\mathcal{F}^{\mathrm{all}}}$ to
$\mathcal{F}^{\mathrm{all}}$ of the inner product $\langle \cdot,\cdot\rangle$ we have defined on $L^2({\Phi_1},V_2)$.

\begin{proposition}
\label{GactsonFall2}
For any $\bar g\in G_1$, the action that takes each $F\in \mathcal{F}^{\mathrm{all}}$ to the GENEO $F^{T(\bar g)}$ preserves $\langle \cdot,\cdot\rangle_{\vert\mathcal{F}^{\mathrm{all}}}$. For any $\bar h\in G_2$, the action that takes each $F\in \mathcal{F}^{\mathrm{all}}$ to the GENEO $F^{\bar h}$ preserves $\langle \cdot,\cdot\rangle_{\vert\mathcal{F}^{\mathrm{all}}}$.
\end{proposition}

\begin{proof}
For every $F_1,F_2\in \mathcal{F}^{\mathrm{all}}$
\begin{align*}
	\langle F_1^{T(\bar g)},F_2^{T(\bar g)}\rangle_{\vert\mathcal{F}^{\mathrm{all}}}
    & = \int_{{\Phi_1}}\langle F_1^{T(\bar g)}(\varphi),F_2^{T(\bar g)}(\varphi)\rangle_{V_2}f_{V_1}(\varphi)\,d\nu_{V_1}\\
	& =  \int_{{\Phi_1}}\langle F_1(\varphi),F_2(\varphi)\rangle_{V_2}f_{V_1}(\varphi)\,d\nu_{V_1}\\
	& =  \langle F_1,F_2\rangle
	\end{align*}
and
\begin{align*}
	\langle F_1^{\bar h},F_2^{\bar h)}\rangle_{\vert\mathcal{F}^{\mathrm{all}}}
    & = \int_{{\Phi_1}}\langle F_1^{\bar h}(\varphi),F_2^{\bar h}(\varphi)\rangle_{V_2}f_{V_1}(\varphi)\,d\nu_{V_1}\\
	& =  \int_{{\Phi_1}}\langle F_1(\varphi),F_2(\varphi)\rangle_{V_2}f_{V_1}(\varphi)\,d\nu_{V_1}\\
	& =  \langle F_1,F_2\rangle
	\end{align*}
since, {\color{black} as stated at the beginning of Subsection \ref{geosection},}  we are assuming that $\langle \cdot,\cdot\rangle_{V_2}$ is invariant under the action of $G_2$.
\end{proof}
}

\subsection{Submanifolds of GENEOs.}
\label{submanifolds}
In this subsection we discuss how it is possible to define a Riemannian structure on a manifold of GENEOs. We also briefly recall some basic definitions and results about Hilbert manifolds and Riemannian manifolds. For further details and the general theory, see \cite{abr12,Klin11}.

Since $L^2(\Phi, V_2)$ is a Hilbert space, it has a natural structure of Hilbert manifold with a single global chart given by the identity function on $L^2(\Phi, V_2)$. Each tangent space $T_p L^2(\Phi, V_2)$ at any point $p \in L^2(\Phi, V_2)$ is canonically isomorphic to $L^2(\Phi, V_2)$ itself.
We can give a Riemannian structure on $L^2(\Phi, V_2)$ by defining a metric $g$ as $g(v,w)(p) = \langle v , w \rangle$ for every $v,w \in  T_p L^2(\Phi, V_2)$, where $\langle \cdot , \cdot \rangle$ is the inner product on $L^2(\Phi, V_2)$. Now, let $M$ be a $C^k$-submanifold of $L^2(\Phi, V_2)$, such that each element of $M$ is a GENEO. Hence, $M$ naturally inherits a Riemannian structure from $L^2(\Phi, V_2)$.
In real applications we are often interested in finite dimensional manifolds. For this reason, we give an explicit definition. A subset $M$ is called an $m$-dimensional $C^k$-submanifold of $L^2(\Phi, V_2)$, if for all $p \in M$ there exist a neighborhood $U$ of $p$ in $L^2(\Phi, V_2)$, an open set $O \subseteq \mathbb{R}^m$ and a $C^k$-map $\phi \colon O \to L^2(\Phi, V_2)$, such that
\begin{enumerate}
	\item $\phi \colon O \to U \cap M$ is a homeomorphism;
	\item the differential $D \phi(x)$ is injective for every $ x \in O$.

\end{enumerate}
We call $\phi$ a parametrization in $p$. In this case, we have an explicit formula for the tangent space $T_p M$ depending on $\phi$:
$$T_p M = D\phi(x) \mathbb{R}^m, \ x= \phi^{-1}(p).$$

{\color{black} \section{An application of our model to select GENEOs}}
\label{Experiments}

In this section, {we will assume that $X$ is the torus $S^1\times S^1$.} The usual embedding of $S^1$ in $\R^2$ entails that we can endow $X$ with the metric induced by the {natural} immersion in $\R^4$. Given a positive integer $n$, we define the finite group $G$ whose elements are the isometries of $X$ that preserve the set $Y=\left\{\left(\frac{2\pi i}{n},\frac{2\pi j}{n}\right): i,j \in \{0,\ldots,n-1\}\right\}\subseteq S^1\times S^1$. Moreover, we consider the function $\chi: X \to\R$ as in the following:

\begin{align}\label{eq:chi}
\chi (x,y):= \begin{cases}
 1 & (x,y) \in \gamma \times \gamma \\
 0 &  \text{otherwise}
 \end{cases}
\end{align}

where $\gamma \subset S^{1}$ is the arc connecting $-\frac{\pi}{n}$ and $\frac{\pi}{n}$ counterclockwise.

We assume $V_1,V_2$ both equal to the $n^2$-dimensional vector space $V$ whose elements are the functions $\varphi:X\to\R$ that can be expressed as:
\begin{equation}\label{eq:phi}
    \varphi(\alpha,\beta):=\sum_{i=0}^{n-1}\sum_{j=0}^{n-1}a_i^j \chi\left(\alpha-\frac{2\pi i}{n},\beta-\frac{2\pi j}{n}\right),
\end{equation}
where $a_{i}^{j}\in\R$ for every $i,j \in \{0,\ldots,n-1\}$.
{\color{black}
After identifying $V$ with $\R^{n^2}$, we endow $V$ with the standard inner product.
}
{\color{black} Moreover, according to the properties required in Section \ref{geosection}, we set $\mu_V(E):=\int_E f_V\ d\nu_V$,
where $f_{V}$ will be defined later and $\nu_V$ is the counting measure on a collection $\Phi$ of discretized images
representing handwritten letters of the English alphabet sampled from the EMNIST dataset.}

Our manifold $M$ of GENEOs is the one containing every GENEO $F$ defined by setting:

\begin{equation}\label{eq:operatore_F}
F(\varphi)(\alpha,\beta):=\frac{1}{4\sqrt{m}}\left(\sum_{t=1}^{m}\sum_{r,s\in\{-1,1\}}u_t\varphi\left(\alpha+r\frac{k_t\pi}{n}, \beta+s\frac{h_t\pi}{n}\right)\right),
\end{equation}

where $m$ is a {fixed} positive integer, $u=(u_1, \dots, u_{m})\in S^{m-1}\subset\R^{m}$, i.e., $\sum_{t=1}^{m}u_t^2=1$, and $k=(k_1, \dots, k_m),h=(h_1, \dots, h_m)$ {are two fixed $m$-tuples of positive integers.} Therefore, each element in $M$ is identified by {the unit vector $u$.} We remark that the scaling factor $\frac{1}{4\sqrt{m}}$ in \eqref{eq:operatore_F} ensures that the operator $F$ is non-expansive.

We now move to a discrete setting by thinking of $V$ as a set of 2D discrete images {\color{black} on a discretized torus}. Given $\varphi \in V$, we refer to a 2D discrete image as the restriction of $\varphi$ to the subset $Y \subset X$, that is $\varphi_{|_Y}: Y \to \R$. {By abuse of notation, to improve the readability of the following lines we avoid the restriction symbol and denote by $\varphi$ a generic image}.  More precisely, given the definitions \eqref{eq:chi}, \eqref{eq:phi} and assuming $Y$ is a discrete $n \times n$ grid in $X$, the generic element $\varphi$ is a discrete 2D image whose $(i,j)$-pixel has value {$\sum_{r=0}^{n-1}\sum_{s=0}^{n-1}a_i^j \chi\left(\frac{2\pi (i-r)}{n},\frac{2\pi (j-s)}{n}\right)$}. We refer to the point $\left(\frac{2\pi i}{n},\frac{2\pi j}{n}\right) \in Y$ as $(i,j)$-pixel for all $i,j=0,\dots,n-1$.  Since $Y$ is preserved by the finite group of isometries $G$, we assume periodic boundary conditions for each generic 2D image. Finally, we suppose the support of the density $f_{V}$ is contained in the EMNIST dataset \cite{cohen2017emnist}, therefore $\Phi$ is defined as a finite subset of the EMNIST of cardinality $N$.

Obviously, the cardinality of the space of GENEOs $M$ makes unfeasible a detailed analysis of these operators. In particular, it can be desirable to restrict this space to a finite set of GENEOs which fairly approximates the whole manifold. In order to simplify the problem and to reduce the degrees of freedom we fix the vectors $k \in \N^{m}$ and $h \in \N^{m}$. Consequently, the manifold $M$ has dimension $m-1$ and each element in $M$ is now identified by the vector $u \in S^{m-1}$.

We now propose a procedure to extract a finite set $M_{r} \subset M$ of $r$ GENEOs, where $r$ is a positive integer. The computation of $M_{r}$ must be guided by a metric which takes into account the observation that in a generic application not all the admissible signals in $\Phi$ can be considered to be equivalent. For example, in the English alphabet we can think the higher is the frequency of a letter, which is simply the amount of times it appears on average in written language, the higher is its importance. In this regard, we remark that the manifold $M$ is endowed with the Riemannian structure defined by considering the inner product \eqref{eq:inner_product} which allows us to provide $M$ with the norm $\vertiii{\cdot}_{L^2}$. The latter takes into account the previous observation by weighting the contribution of each of the admissible signals in $\Phi$ through a probability density function.

The basic idea of our proposal is to cast the problem of defining $M_{r}$ as a non-linear constrained optimization problem whose objective is $L^2$-norm based and the constraints reflect the hypothesis $u \in S^{m-1}$. In particular, each GENEO in $M$ is identified through a vector $u \in S^{m-1}$, therefore seeking for $r$ operators in $M$ is equivalent to search for $r$ vectors in $S^{m-1}$. Then, we propose to read the task of defining $M_{r}$ as the following constrained optimization problem:

\begin{equation}\label{eq:minimization_problem}
    \underset{u_{i} \in S^{m-1}, \forall i=1 \dots r}{\arg \min}  \sum_{i=1}^{r} \sum_{j=i+1}^{r} \dfrac{1}{\vertiii{ F_{u_{i}} - F_{u_{j}}}^{2}_{L^2}}.
\end{equation}

By definition \eqref{eq:norm}, the optimization problem in \eqref{eq:minimization_problem} is equivalent to the following one:

\begin{equation}\label{eq:minimization_problem2}
    \underset{u_{i} \in S^{m-1}, \forall i=1 \dots r}{\arg \min}  \sum_{i=1}^{r} \sum_{j=i+1}^{r} \dfrac{1}{\sum_{s=1}^{N} f_{V}({\varphi_s}) \rVert F_{u_{i}}({\varphi_{s}}) - F_{u_{j}}({\varphi_{s}}) \rVert_{V}^{2}}.
\end{equation}

In our example we assume $\Phi$ is constituted by $N=26$ different images representing the handwritten letters of the English alphabet sampled from the EMNIST dataset. We now consider $\varphi_{s}$ as the $s$-th letter in $\Phi$. Then, the probability density $f_{V}$ on $\varphi_s$ is defined as the frequency of the letter $\varphi_s$ in the English alphabet \cite{lewand2000cryptological}. In addition, we suppose that $f_{V}(\varphi) = 0$ for all $\varphi \in V \setminus \Phi$.

We stress that the objective in \eqref{eq:minimization_problem2} is non-smooth and non-convex. A feasible (satisfying the constraints) local-minimizer is seeked through the iterative \textit{interior point} approach which is performed by using the built-in Matlab function, namely \texttt{fmincon}. The interior point approach used, at each iteration computes a feasible solution following the gradient direction of the objective \cite{nocedal2006numerical} starting from a given feasible initial guess.

In our experiment, the iterates performed by \texttt{fmincon} are stopped when the optimality measure and the constraints tolerance are lower than $10^{-6}$ \cite{nocedal2006numerical}. In addition, we have fixed a maximum number of objective function evaluations equals to 3000. We refer to $M_{r}^{k}$ as the $k$-th iterate of the interior point method and, in particular, by $M_{r}^{0}$ we refer to the starting iteration of the optimization process. Moreover, we fix $m=2$ and $r= 10, 20$.

In Fig. \ref{fig:m1} we represent by a blue circular line the manifold $M$. In Fig \ref{fig:m1} (a)-(c) we depict by red dots the starting iterates $M^{0}_{r}$ of the optimization process when $r=10$ and $r=20$, respectively, whereas in Fig \ref{fig:m1} (b)-(d) the red dots represent the solutions $M_{10}$ and $M_{20}$ of \eqref{eq:minimization_problem2} when $M^{0}_{10}$ and $M^{0}_{20}$ are chosen as initial guess, respectively. We observe that following the gradient flow of the objective in \eqref{eq:minimization_problem2} the outcomes $M_{10}$ and $M_{20}$ present GENEOs (red points) better distributed and separated than the ones in $M^{0}_{10}$ and $M^{0}_{20}$ which have been randomly selected. Indeed, the objective has been defined to penalize mostly close (with respect to the given $L^2$-metric) operators.

\begin{figure}[ht!]
	\centering
	\subfloat[$M^{0}_{10}$]{{\includegraphics[width=0.5\textwidth]{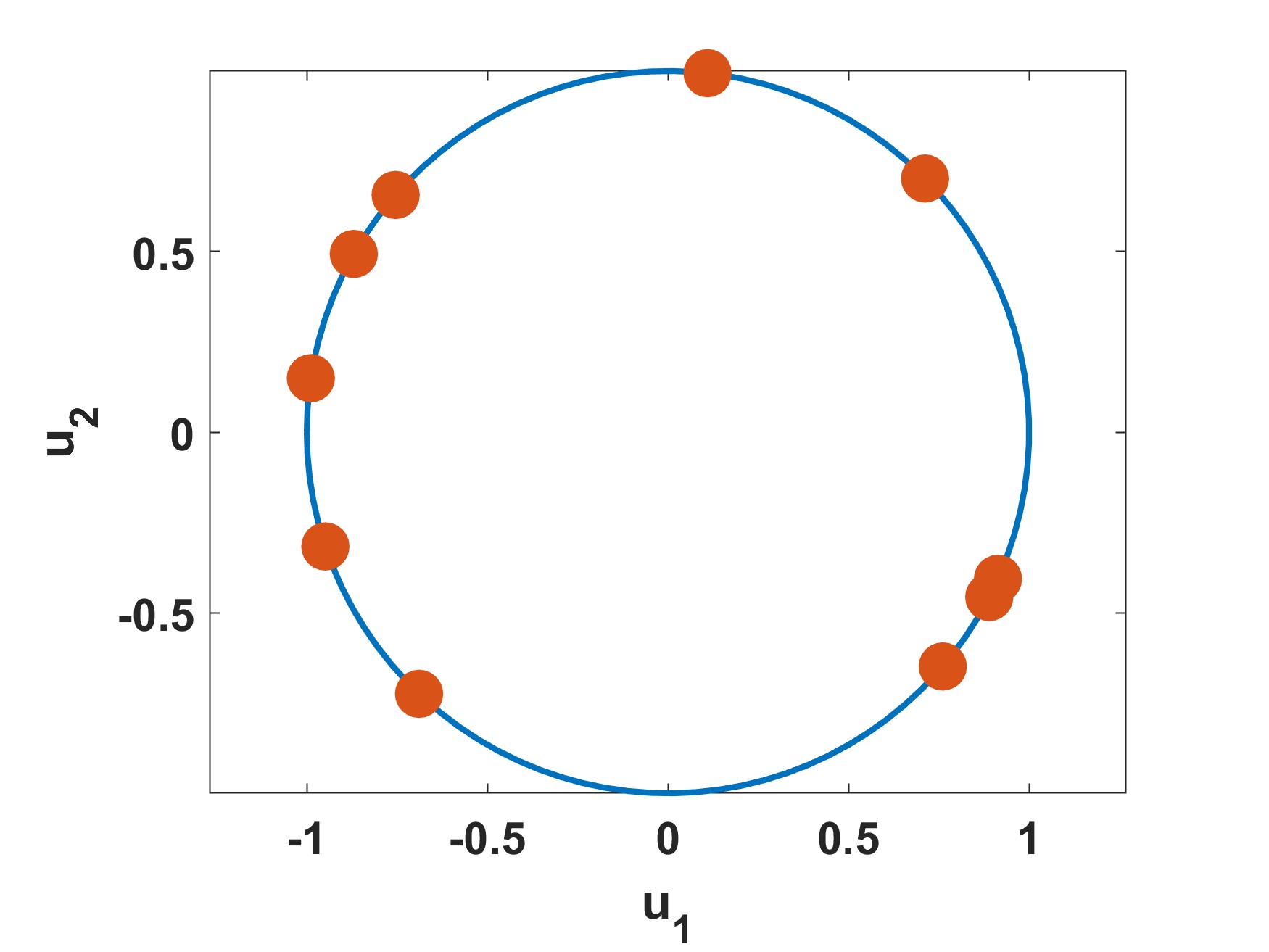}}}
	\subfloat[$M_{10}$]{{\includegraphics[width=0.5\textwidth]{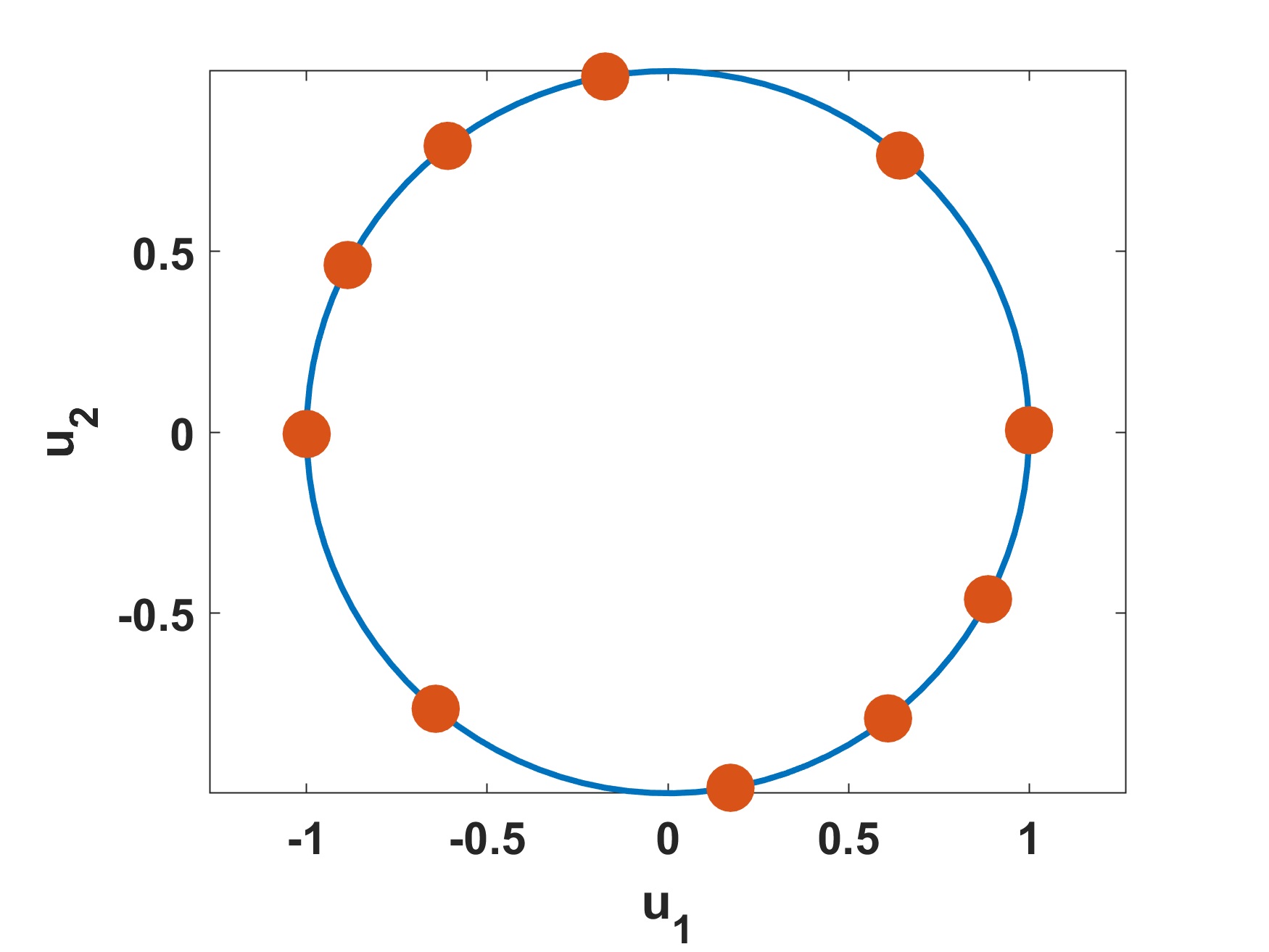}}}
	
	\subfloat[$M^{0}_{20}$]{{\includegraphics[width=0.5\textwidth]{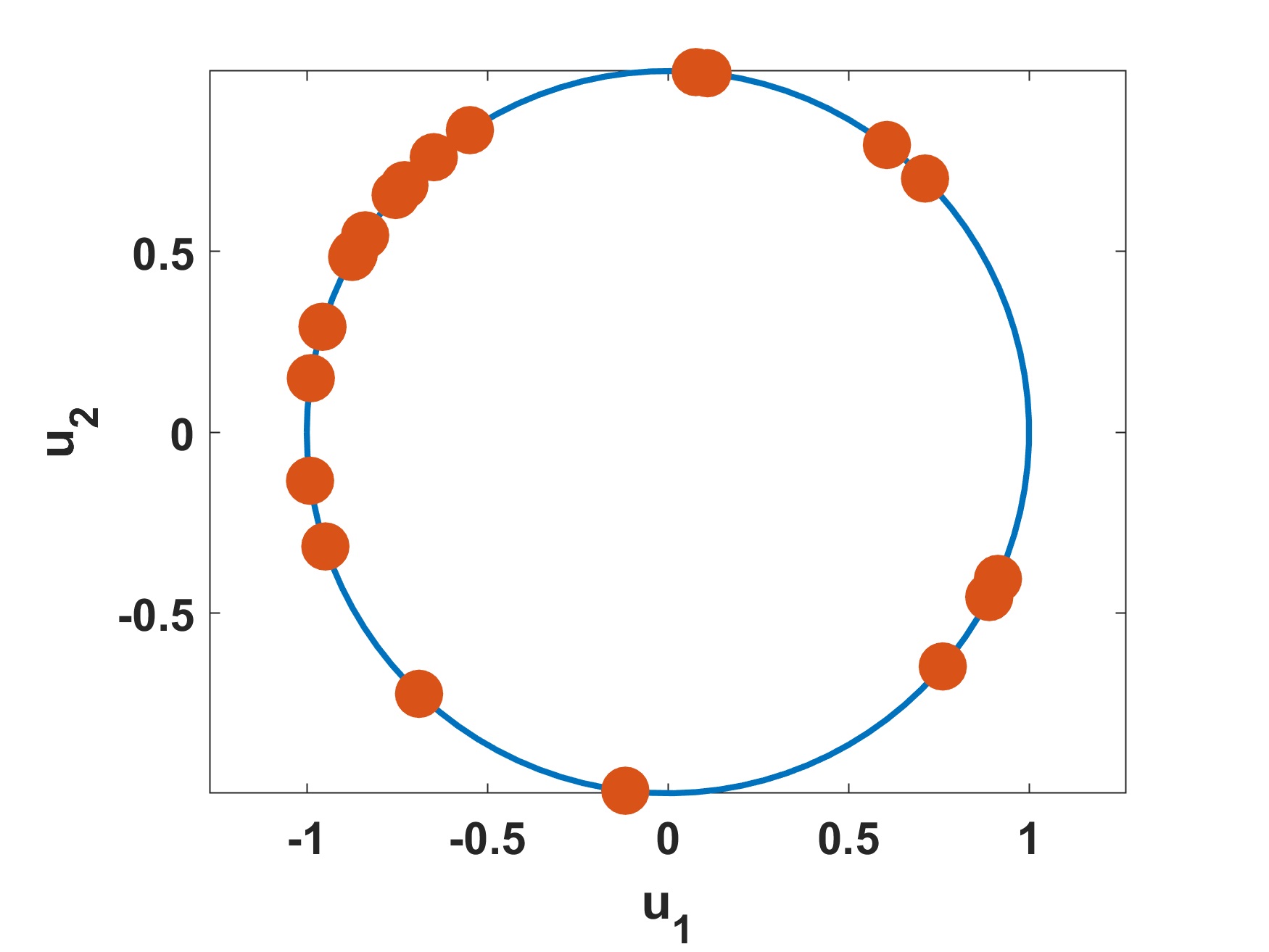}}}
	\subfloat[$M_{20}$]{{\includegraphics[width=0.5\textwidth]{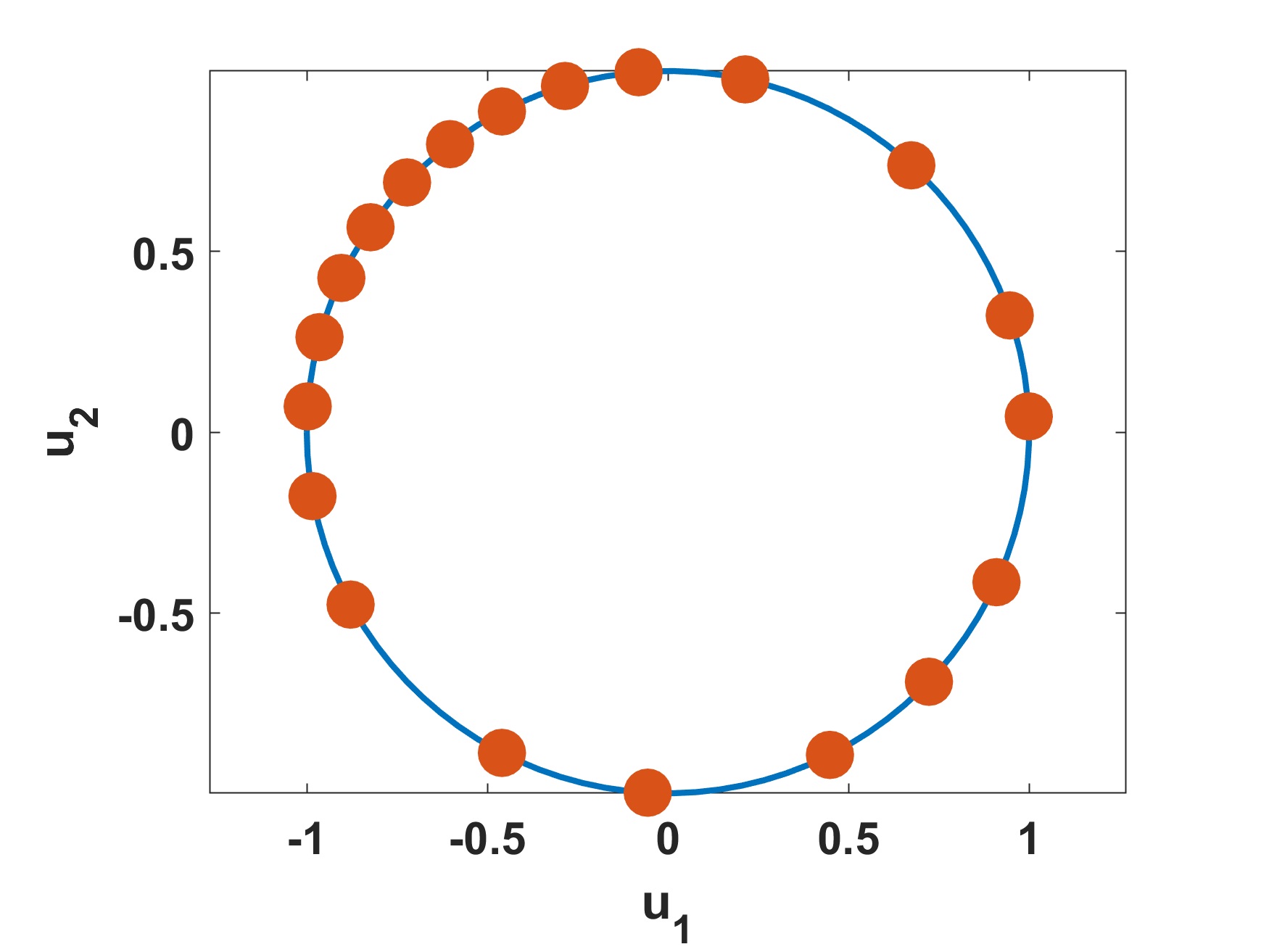}}}
	\caption{Random initial guess and outcomes of the interior-point method solving the optimization problem \eqref{eq:minimization_problem2} when $m=2$. (A)-(B) starting feasible iterate and related solution given $r=10$. (C)-(D) starting feasible iterate and related solution given {$r=20$}.}
	\label{fig:m1}
\end{figure}

We now want to evaluate how much better than the random $M^{0}_{10}$ and $M^{0}_{20}$ the computed $M_{10}$ and $M_{20}$ approximate the manifold $M$. For this purpose, we consider the evaluation set $E \subset M$ made up of 100 GENEOs belonging to $M$ randomly selected, then for every $F \in E$ we define $\eta_{r}(F)$ as the following ratio:
\begin{equation} \label{eq:eta}
    \eta_{r}(F):= \dfrac{\underset{F_{i}\in M_{r}}{\text{min}}  \vertiii{F - F_{i}}^{2}_{L^2}}{\underset{F_{j}\in M^{0}_{r}}{\text{min}}  \vertiii{F - F_{j}}^{2}_{L^2}}.
\end{equation}
We remark that $\eta_{r}(F)$, according to the formula \eqref{eq:eta}, measures the ratio of the minimum distances among the GENEO $F$ belonging to $E$ and the ones in $M_{r}$ and $M_{r}^{0}$. Therefore, a value $\eta_{r}(F)$ less than one entails that $F \in E$ is closer in terms of $L^2$-distance to a GENEO in $M_{r}$, conversely $F \in E$ is closer to a GENEO in $M^{0}_{r}$. Since $E$ is made up of 100 GENEOs we compute the mean values of $\left\{\eta_{r}(F) \right\}_{F \in E}$ choosing $r=10$ and $r=20$. In the first case the mean value is equal to $0.5833$ while in the second case it is equal to  $0.5442$. This states that, in mean, the outcomes $M_{10}$ and $M_{20}$ better approximate the whole manifold $M$ if compared to their random counterparts $M^{0}_{10}$ and $M^{0}_{20}$ used as initial guesses in the iterative scheme solving \eqref{eq:minimization_problem2} when $r=10$ and $r=20$, respectively.

%

\section{Discussion}
\label{Discussion}
In this paper we have shown how the space $\mathcal{F}$ of all GENEOs can be endowed with a suitable geometrical structure, so allowing to prove some of the relevant properties of such a space. In particular, we have devised a metric that guarantees the compactness of $\mathcal{F}$ under the main assumption that the set $\Phi$ of signals we are considering is compact. Such a metric differs from the one introduced in \cite{BFGQ19} and takes into account the probability of signals in $\Phi$. Our compactness result requires new metrics on $\Phi$ and the equivariance group $G$ and implies that $\mathcal{F}$ can be approximated by a finite set of operators within an arbitrarily small error. Furthermore, we have shown how we can define a suitable Riemannian structure on finite dimensional submanifolds of the space of GENEOs and take advantage of this structure to build approximations of those manifolds via a gradient flow minimization. We believe that the availability of these results could be of use in reducing the complexity of selecting suitable operators in applications concerning topological data analysis and deep learning. In the future, we would like to explore the possibility of extending our results to operators that are equivariant and non-expansive only almost everywhere on $\Phi$, so improving the theoretical and practical use of our mathematical model. {Another line of research could be reconstructing the theory in \cite{BFGQ19} by considering another metric on the space $\Phi$ instead of the one induced by the $L^{\infty}$-norm. In this way, new practical results could be compared with the ones in \cite{BFGQ19} to realize which metrics are efficient in different experiments. }

\section*{Acknowledgement}
This research has been partially supported by INdAM-GNSAGA and INdAM-GNCS. {The research of A.S. has been supported by the Institute for Research in Fundamental Sciences (IPM).} The authors thank Elena Loli Piccolomini for her helpful advice. {The authors would like to thank Nikolaos Chalmoukis for comments that greatly improved the manuscript.} This paper is dedicated to the memory of Ivana Paoletti, Giorgio Mati and Don Piero Vannelli.

\section*{Conflict of interest}
The authors declare that they have no conflict of interest.

\section*{Authors' contributions} P.F. devised the project. P.F., N.Q. and A.S. developed the mathematical model. P.C. took care of the experiments. All authors read and approved the final manuscript. The authors of this paper have been listed in alphabetical order.

\bibliographystyle{amsplain}
\bibliography{Riemannian_paper_bib}

\end{document}